\newtheorem{theorem}[subsection]{Theorem}
\newtheorem{proposition}[subsection]{Proposition}
\newtheorem{lemma}[subsection]{Lemma}
\newtheorem{conj}[subsection]{Conjecture}
\theoremstyle{definition}
\newtheorem{rmk}[subsection]{Remark}
\newcommand{\eq}[1]{\begin{equation}#1\end{equation}}
\newcommand{\eqn}[1]{\begin{equation*}#1\end{equation*}}
\newcommand{\al}[1]{\begin{align}#1\end{align}}
\newcommand{\aln}[1]{\begin{align*}#1\end{align*}}
\newcommand{\R}{\mathbb{R}}
\newcommand{\Z}{\mathbb{Z}}
\newcommand{\N}{\mathbb{N}}
\newcommand{\E}{\mathbb{E}}
\renewcommand{\P}{\mathbb{P}}
\newcommand{\var}{\mathrm{Var}}
\newcommand{\g}{\gamma}
\newcommand{\sqn}{{\square_n}}
\newcommand{\e}{\eta}
\renewcommand{\epsilon}{\varepsilon}
\newcommand{\G}{\Gamma}
\newcommand{\inv}[1]{\frac{1}{#1}}
\newcommand{\ens}[1]{\left\{#1\right\}}
\newcommand{\pr}[1]{\left( #1 \right)}
\newcommand{\crch}[1]{\left[ #1 \right]}
\newcommand*\dif{\mathop{}\!\mathrm{d}}
\newcommand{\law}{\mathrm{Law}}
\newcommand{\Leb}{\mathsf{Leb}}
\newcommand{\Poi}{\mathsf{Poi}}
\newcommand{\Binom}{\mathsf{Bin}}
\newcommand{\id}{\mathsf{id}}
\newcommand{\til}{\mathsf{til}}
\newcommand{\Ent}[2]{\mathsf{Ent}\left (#1~\middle\Vert~#2\right)}
\newcommand{\Ents}[2]{\mathscr{Ent}_s\left (#1~\middle\Vert~#2\right)}
\newcommand{\trib}{\mathscr{F}}
\newcommand{\sfP}{\mathsf{P}}
\newcommand{\sfR}{\mathsf{R}}
\newcommand{\sfC}{\mathsf{C}}
\newcommand{\sfW}{\mathsf{W}}
\newcommand{\Wass}{\mathsf{Wass}}
\newcommand{\pp}{\mathscr{P}(\Gamma)}
\newcommand{\pps}{\mathscr{P}_s(\Gamma)}
\newcommand{\abs}[1]{\left|#1\right|}
\title{Rigidity of One-Dimensional Point Processes via Optimal Transport}
\author{David Dereudre}
\address{Université de Lille, Laboratoire Paul Painlevé}
\email{david.dereudre@univ-lille.fr}
\author{Rafaël Digneaux}
\email{rafael.digneaux@univ-lille.fr}
\keywords{Stationary point processes, 1D, Rigidity, Number-Rigidity, Cyclic-Factor, Non-singular Riesz gas, Coulomb gas, Optimal transport. }
\begin{document}
	
\maketitle

	\begin{abstract}
We investigate rigidity phenomena in one-dimensional point processes. We show that the existence of an $L^1$ transport map from a stationary lattice or the Lebesgue measure to a point process is sufficient to guarantee the properties of Number-Rigidity and Cyclic-Factor. We then apply this result to non-singular Riesz gases with parameter $s\in(-2,-1]$, defined in infinite volume as accumulation points of stationarized finite-volume Riesz gases. This includes, for $s=-1$, the well-known one-dimensional Coulomb gas (also called Jellium plasma, or the one-component 1D plasma).

	\end{abstract}

	\section*{Introduction}
	Spatial point processes have attracted great interest across many scientific fields, ranging from spatial data analysis, materials science, telecommunications, and biology to statistical physics. Point processes form one of the central topics of stochastic geometry. In mathematical physics, they naturally arise when studying systems composed of a very large number of particles, such as a gas of electrons (a plasma). They also arise in more theoretical contexts, such as the study of eigenvalues of random matrices or the zeros of random polynomials, of finite or infinite degree). Unlike the classical Poisson point process, which exhibits perfect independence between particles, most interesting models display strong long-range dependencies. These dependencies usually entail some form of rigidity of the system, which accounts for physical behaviour such as incompressibility, screening effects, and crystallization.  
	
	In the mathematical community, rigidity properties have sparked a great deal of research. One such property is \textbf{Number-Rigidity}, the striking phenomenon whereby a point process has a fixed number of points in any compact set once the outside configuration is known. This property holds for i.i.d. perturbed lattices in dimensions $d=1,2$ with $L^d$ perturbations \cite{holroyd2013insertiondeletiontolerancepoint}, in dimension $d\geq3$ with Gaussian perturbations of sufficiently large variance \cite{peres2014rigidity}, and for several other models, including the Ginibre ensemble \cite{Ghosh_2017}, the determinantal point process with sine kernel \cite{ghosh2014determinantalprocessescompletenessrandom}, the Sine$_\beta$ gas \cite{dereudre2019dlrequationsrigiditysinebeta}, and the two-dimensional Coulomb gas \cite{leblé2024dlrequationsnumberrigiditytranslationinvariance}. Riesz gases for $s\in(d-1,d)$ are not Number-Rigid \cite{dereudre2021numberrigidity}, and furthermore, in dimension $d\geq3$, the Riesz gas for $s=d-2$ (the Coulomb gas) is not Number-Rigid \cite{thoma2023nonrigiditypropertiescoulombgas}. This property was first observed in \cite{aizenman2000boundedfluctuationstranslationsymmetry}, which showed that the one-dimensional Coulomb gas is Number-Rigid.
	
	Another property is \textbf{Hyperuniformity}, which is when the variance of the number of points in large balls is subvolume. This property holds for $L^d$-dependent perturbed lattices in dimensions $d=1,2$, but generally fails to hold in dimension $d\geq 3$ \cite{dereudre2024nonhyperuniformityperturbedlattices}. It is also satisfied by the Ginibre ensemble, the determinantal process with sinus kernel, and the two-dimensional Coulomb gas \cite{huesmann2024linkhyperuniformitycoulombenergy, leblé2025twodimensionalonecomponentplasmahyperuniform}, among others.  
	
	A stationary (i.e., translation-invariant) point process $\xi$ of unit intensity in dimension $d=1$ is said to have a \textbf{Cyclic-Factor} if its law $\sfP\in\pps$ can be written as  
	\eq{
	\sfP = \int_0^1 \dif u~ \sfP_0\circ \theta_u^{-1},}
	where $P_0$ is not $\R$-stationary but is $\Z$-stationary, and the supports of the phases $\{\sfP_0\circ \theta_u^{-1}\}_u$ are mutually disjoint. This property was formally introduced in \cite{aizenman2000boundedfluctuationstranslationsymmetry} and provides a mathematical definition of crystallization, also referred to as (translational) symmetry breaking. Most results on crystallization remain conjectural, with only a few models rigorously proved to form crystals. This has been established for the one-dimensional Coulomb gas at any temperature \cite{Kunz74, AizenmanMartin, aizenman2000boundedfluctuationstranslationsymmetry}, and is expected for a wide range of statistical physics models, such as the hard-sphere model at high density, or the Coulomb gas in dimension $d\geq 3$ at low temperature. Notice how this representation resembles the law of the stationarized version of a deterministic lattice, which provides the most straightforward example of a crystal. This decomposition is also related to the extremal decomposition of Gibbs measures. Finally, we also note other interesting rigidity notions, such as Insertion-Tolerance and Deletion-Tolerance \cite{holroyd2013insertiondeletiontolerancepoint}.

	As seen from the results mentioned above, perturbed lattices, under assumptions on the perturbations’ moments, have often been used as tools to establish rigidity. Since a perturbed lattice can be viewed as a matching between the lattice and its perturbed version, there are natural connections with optimal transport, both in finite and infinite volume. Being an $L^p$-perturbed lattice is equivalent to having finite $p$-transport to a lattice or to Lebesgue measure. Optimal transport in the context of infinite point processes has been formalized in \cite{erbar2024optimaltransportstationarypoint}. Recently, using optimal transport tools and ideas, links were established between Hyperuniformity and perturbed lattices \cite{lachièzerey2025hyperuniformityoptimaltransportpoint, butez2024wassersteindistancehyperuniformpoint}. As stated in Theorem \ref{thm1}, we show that an $L^1$ dependent perturbed lattice is Number-Rigid and Cyclic-Factor (it is proved in \cite{dereudre2024nonhyperuniformityperturbedlattices} that this also implies Hyperuniformity). The main tools for this theorem rely on optimal transport, well-ordered matchings, and the ergodic theorem.  
	
	Our main application of Theorem \ref{thm1}, contained in Theorem \ref{thm2}, concerns non-singular Riesz gases. Recall that Riesz gases in dimension $d$, with homogeneity parameter $s$, form a broad class of infinite-particle systems with long-range, repulsive interactions. More precisely, two particles at positions $x,y\in\R^d$ interact via the potential $|x-y|^{-s}$ for $s>0$, $-|x-y|^{-s}$ for $s<0$, and $-\log|x-y|$ for $s=0$. We refer to \cite{LewinSurvey2022, serfaty2024lecturescoulombrieszgases} for general reviews of Riesz gases. These models include some of the most studied point processes, such as the Coulomb gas for $s=d-2$, the log-gas for $s=0$, $d=1$ (also called the Sine$_\beta$), and the Dyson sine process for $s=0$, $d=1$, $\beta=2$ (the infinite-volume limit of the Gaussian Unitary Ensemble).  
	
	In dimension $d=1$, the Coulomb gas with $s=-1$ (also called the one-dimensional One Component Plasma, or 1D Jellium) has been extensively studied and is fully solvable since the 1960s \cite{Baxter_1963, Kunz74, AizenmanMartin, aizenman2000boundedfluctuationstranslationsymmetry, Chafa__2022}. It is a well-known and elegant model in statistical physics, and it is known to crystallize at any temperature. For one-dimensional Riesz gases with $s<0$, it is explained in \cite{LewinSurvey2022} and demonstrated numerically in \cite{lelotte2023phasetransitionsonedimensionalriesz} that the expected phase diagram is depicted in Figure \ref{fig:Riesz1D}: crystallization would occur at any temperature for $s\leq -1$, whereas for $-1<s<0$ crystallization would occur only at sufficiently low temperature. In Theorem \ref{thm2}, we prove partially that conjecture, more precisely we prove that Riesz gases with $s\leq-1$ are Number-Rigid, Hyperuniform of type I, and Cyclic-Factor. Theorem \ref{thm1} may also help to establish the conjectured behavior for $-1<s<0$, which is still open to this day. The main ideas rely on the Fourier representation of the energy, and on generalizations of the one-dimensional Coulomb case.

	\begin{figure}
		\centering
		\includegraphics[width = 0.7\textwidth]{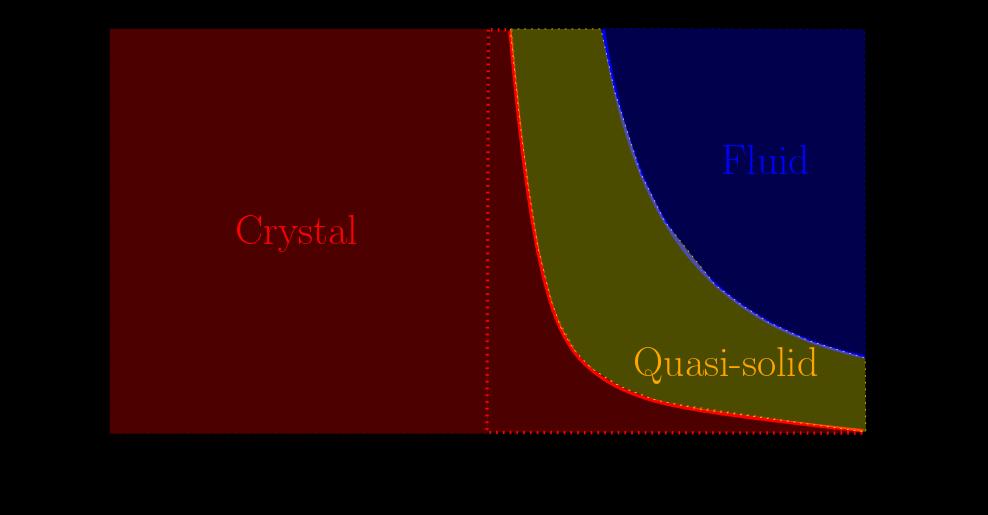}
		
		\caption{Conjectured phase diagram for the one-dimensional Riesz gas with $-2<s\leq 0$ \cite{LewinSurvey2022, lelotte2023phasetransitionsonedimensionalriesz}. The known and studied cases are as follows. The case $s=-1$ corresponds to the 1D Coulomb gas. The case $s=0$ corresponds to the Sine$_\beta$ gas. At $s=0$ and $\beta =2$, this is the Dyson sine process, a determinantal point process. It is expected that crystallization occurs at any temperature for $-2<s\leq -1$ (no phase transition), and that a double phase transition in temperature takes place for $-1<s<0$: from crystal, to quasisolid, and then to fluid as temperature increases. Theorem \ref{thm2} indicates crystallization for $s\leq -1$. Other conjectures remain open.}

		\label{fig:Riesz1D}
	\end{figure}
	
	The paper is organized as follows. In Section \ref{sec:SettingsTools}, we introduce the necessary theoretical background on point processes, stationarity, and perturbed lattices, which is mostly classical material. We also discuss optimal transport in the context of point processes. This section contains several auxiliary lemmas, such as the existence of a well-ordered matching. In Section \ref{sec:Results}, we present our two main theorems: the first is a general result on the rigidity of $L^1$ dependent perturbed lattices, while the second is an application to one-dimensional Riesz gases with parameter $-2<s\leq -1$, after formally defining them in infinite volume. The proof of the first theorem is detailed in Section \ref{sec:proofs}, and all proofs regarding Riesz gas are in Section \ref{sec:Riesz}. Proofs of result regarding optimal transport are in Section \ref{sec:OT}, and classical material regarding specific entropy is in Appendix \ref{sec:specificentropy}.
\tableofcontents

\section{Setting and tools} \label{sec:SettingsTools}

\subsection{Point processes} 
\label{ssec:pointprocesses}
A deterministic configuration of points in $\R$ is defined as a locally finite, integer-valued measure on $\R$, that is, a measure of the form $\xi = \sum_{j} \delta_{x_j}$ where $\{x_j\}_j$ is a finite or infinite sequence of points in $\R$ without accumulation points. In this framework, we allow multiple points to occur at the same location. The space $\G$ of such measures is a Polish space when endowed with the vague topology, under which the maps $\xi \mapsto \xi(f) = \int f(x)\,\xi(\dif x)$ are continuous for all compactly supported continuous functions $f$ on $\R$. We equip $\G$ with the $\sigma$-algebra generated by the counting maps $\xi \mapsto N_A(\xi) = \xi(A)$ for all Borel sets $A \subset \R$. 

A \textit{point process} is a random variable $\xi = \xi^\omega$ defined on a probability space $(\Omega,\trib,\P)$ with values in $\G$. By extension, we also use the term “point process” to refer to the law of this random variable, $\sfP = \P \circ \xi^{-1}$, as is standard in probability theory, and we denote by $\pp$ the set of such laws. 

The \emph{binomial point process}, denoted $\Binom_{n,A}$, with $n$ points in a finite volume $A$, is defined as the law of $\xi = \sum_{j=1}^n \delta_{X_j}$, where $\{X_j\}_j$ are i.i.d. random variables uniformly distributed in $A$. The \emph{Poisson point process}, denoted $\Poi$ when of unit intensity, can be seen as the natural extension of the binomial process to infinite volume. A key property of $\Poi$ is that for any pairwise disjoint Borel sets $A_1,\ldots,A_p$, the random variables $\xi(A_1),\ldots,\xi(A_p)$ are independent under $\Poi$. On a bounded Borel set $A$, the restriction $\Poi_A$ is defined as the law of a binomial process $(N,A)$ where $N$ is a Poisson random variable on $\N$ with parameter $|A|$. For comprehensive references and pedagogical introductions, see \cite{Last_Penrose_2017, daley2007introduction, kallenberg2017random}. The theory of point processes is naturally embedded in the more general framework of random measures, which we will also employ.

The space $\pp$, equipped with the weak topology (with respect to the vague topology), carries little information. We therefore define the \textit{local topology}, given by the fact that $\sfP_n$ converges locally to $\sfP$ if and only if 
\eq{\E_{\sfP_n}\crch{F(\xi)} \xrightarrow[n\to+\infty]{} \E_\sfP\crch{F(\xi)},\quad \forall F \text{ measurable, local, tame.}} 
A functional $F$ of measures is said to be \textit{local} if there exists a bounded box $\square$ such that $F(\xi)=F(\xi_\square)$ depends only on the configuration within that box. The same functional is said to be \textit{tame} if there exists some $C>0$ such that 
\eq{\abs{F(\xi)}\leq C(1+\xi(\square)).} 
In particular, the number of points in a bounded domain is a local and tame functional, and hence is captured by this topology—unlike with weak convergence (with respect to the vague topology).

\subsection{Stationarity and the tiling procedure} \label{ssec:tiling}
Translations $\ens{\theta_x,~x\in\R}$ act naturally on these spaces, by 
$\xi = \sum_j \delta_{x_j}\mapsto \theta_x\xi = \sum_j \delta_{x_j+x}$ on $\G$, 
and by $\sfP\mapsto \sfP\circ\theta_x^{-1}$ on $\pp$. 
A point process $\sfP\in\pp$ is called \textit{stationary} if 
\eq{\sfP\circ\theta_x^{-1} = \sfP,\quad \forall x\in\R,} 
that is, $\xi\sim\sfP$ is stationary if and only if $\theta_x\xi$ has the same distribution as $\xi$ for all $x\in\R$. 
The space of such laws are denoted by $\pps$. 
Projections on Borel sets $A$ act naturally on these spaces, by 
$\xi\mapsto \xi_A = \pi_A(\xi) = \xi(A\cap\cdot)$ on $\G$, 
and $\sfP\mapsto \sfP_A = \sfP|_A= \sfP\circ\pi_A^{-1}$ on $\pp$.

Regarding Riesz gas, we will construct infinite volume by defining in finite domains and then going to the limit as the volume grows to infinity. To that end, it is useful to consider stationarized version of finite point processes. More precisely, given a point process $\sfP_\square$ in a finite square $\square$ of length $m$, we construct a stationarized version $\sfP_\square^{\mathrm{stat}}$ as follows. 
First, tile the space $\R$ using the box $\square$, as $\R = \bigcup_{k\in m\Z}\theta_k\square$, 
then on each translated box $\theta_k\square$ sample independent copies $\ens{\xi_k}$ with law $\theta_k\sfP_\square$. 
The \emph{tiled version} $\sfP^{\mathrm{til}}$ is the law of $\sum_{k\in m\Z} \xi_k$, which is $m\Z$-stationary. To make it $\R$-stationary, let $U$ be uniform in $\square$, independent from the $\ens{\xi_k}_k$. 
The \emph{stationarized law} $\sfP^{\mathrm{stat}}$ is defined as the law of $\sum_{k\in m\Z}\theta_U\xi_k.$

\subsection{Perturbed lattices} 
The shifted lattice $\xi = \sum_{j\in\Z} \delta_{j+U}$ with $U$ uniform on $[0,1]$ is a simple example of a stationary point process, which exhibits extreme rigidity. More generally, we refer to \emph{perturbed lattices} of the form $\xi= \sum_{j\in\Z}\delta_{j+U+p_j}$ with local perturbations $\{p_j\}$, possibly with dependence among the $p_j$. We assume that $\{p_j\}_{j\in\Z}$ is $\Z$-stationary, meaning $\{p_j\}_j = \{p_{j-k}\}_j$ in distribution, for all $k\in\Z$. In particular all perturbations share the same distribution. When the perturbations are independent, we speak of i.i.d. perturbed lattices. 

\subsection{Optimal transport}

The Kantorovich–Wasserstein distance between two deterministic measures $\xi,\eta$ (of equal mass) is defined as 
\eq{\sfW_p^p(\xi,\eta) = \inf_{q} \int_{\R\times \R} |x-y|^p\dif q(x,y),} 
where the infimum runs over all couplings of $(\xi,\eta).$ This setup can be extended to random measures when their laws are stationary. Optimal transport in the setting of stationary random measures is fully developed in \cite{erbar2024optimaltransportstationarypoint}. For $\sfP,\sfR \in\pps$ we can define 
\eq{\Wass_p^p(\sfP,\sfR) = \inf_{(\xi,\eta)} \E\crch{\inf_{q}\limsup_n \frac{1}{\abs{\square_n}}\int_{\square_n\times \R} |x-y|^p\dif q (x,y)}. \label{eq:OT}} 
Here the first infimum is over couplings at the level of probabilities, i.e., couplings of stationary point processes with $\xi\sim\sfP$ and $\e\sim\sfR$. The second infimum is at the level of configurations, with $q^\omega$ being a coupling of the measures $\xi^\omega$ and $\e^\omega$ for all $\omega$ (all these measures being of infinite mass). Lastly, the boxes are $\square_n = [-\frac{n}{2},+\frac{n}{2}]$, expanding to $\R$ as $n\to+\infty$. From \cite{erbar2024optimaltransportstationarypoint} we know that if $\Wass_p(\sfP,\sfR)<+\infty,$ optimal couplings exist.  

Let us assume we have a specific setup $(\Omega,\trib,\P)$ equipped with a flow $\theta:(x,\omega)\in\R \times\Omega\mapsto \theta_x\omega$ such that $\theta_{x+y} = \theta_x\circ\theta_y$ and $\theta_0 = \mathsf{id}$, with $\P$ satisfying $\P\circ\theta_x^{-1} = \P$ for all $x\in\R^d$. Stationarity of the law of $\xi$ is then implied by equivariance, defined as 
\eq{\xi^{\theta_x\omega}(A) = \xi^\omega(A-x), \quad \forall x\in\R,~\forall A \text{ Borel.}} 
This means that the flow is compatible with the natural flow of translations on the space.  In this specific setup, we can rewrite 
\eq{\Wass_p^p(\sfP,\sfR) = \inf_{(\xi,\eta)}\inf_{q} \E\crch{\int_{\square_1\times \R } |x-y|^p\dif q (x,y)}.\label{eq:rewrite}} 
Here, $(\xi,\e)$ and $q$ are all equivariant in some sense, even at the level of configurations for $q$, meaning that points must be matched in a translation-invariant way. The reason this works is that, under the hypothesis of equivariance, we can remove the limit superior from the expectation by the ergodic theorem. This equality holds on any fixed box $\square$, provided we divide accordingly by the volume $\abs{\square}.$ It is shown in \cite{erbar2024optimaltransportstationarypoint} that $\Wass_p$ is an extended geodesic distance on $\pps$. We can also define the same cost $\sfC_h$ for another cost at the level of points $h(x-y)$ (more general than $|x-y|^p$) under suitable assumptions on $h.$

We need a result on the Wasserstein distance for a stationarized point process.

\begin{lemma}\label{lem:Wass}
	Let $\square$ be a bounded square in $\R$ and let $\sfP_\square$ be a point process supported in $\square$. Let $\sfP_\square^{\mathrm{stat}}\in\pps$ be the stationarized version of $\sfP_\square$ . Then
	\eq{\Wass_p^p(\sfP^{\mathrm{stat}}_\square,\Leb_{\R}) \leq \frac{\E_{\sfP_\square}\crch{\sfW_p^p(\xi,\Leb_\square)}}{\abs{\square}}.}
\end{lemma}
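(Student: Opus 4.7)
The strategy is to exhibit an explicit coupling realizing the right-hand side, built directly from the tiling construction of $\sfP^{\mathrm{stat}}_\square$, and then to compute its cost by the law of large numbers on the i.i.d. tiles.

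First, I would construct the coupling at the configuration level. Let $m = \abs{\square}$. Sample independent configurations $\xi_k \sim \theta_{km}\sfP_\square$ for $k \in \Z$, and an independent $U$ uniform on $\square$. By a measurable selection, choose an optimal $\sfW_p$-plan $q_k^\omega$ between $\xi_k$ and $\Leb_{\theta_{km}\square}$ in each tile; by stationarity of the tile distribution under $\theta_{km}$ we have $\E[\sfW_p^p(\xi_k,\Leb_{\theta_{km}\square})] = \E_{\sfP_\square}[\sfW_p^p(\xi,\Leb_\square)]$. Set $q^\omega = \sum_{k\in\Z} q_k^\omega$ and then shift both marginals: $\tilde q^\omega = (\theta_U \otimes \theta_U)_* q^\omega$. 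By construction, $\tilde q^\omega$ is a coupling between $\sum_k \theta_U \xi_k \sim \sfP^{\mathrm{stat}}_\square$ and $\theta_U \Leb_\R = \Leb_\R$. If $\E_{\sfP_\square}[\sfW_p^p(\xi,\Leb_\square)] = +\infty$ there is nothing to prove, so we may assume the expectation is finite (which in particular forces $\sfP_\square$ to be supported on configurations of mass $m$).

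Second, I would recast this in the equivariant framework of \eqref{eq:rewrite}. Place everything on the canonical product space carrying a shift flow $\theta_x$ that acts on $U$ by $U\mapsto U+x \pmod m$ and on the tile index accordingly; then $\xi^\omega$, $\eta^\omega = \Leb_\R$, and $\tilde q^\omega$ are all equivariant, so the identity \eqref{eq:rewrite} applies.

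Third, I would estimate the cost. For $n = Nm$ with $N$ large, every matched pair $(x,y)$ with $x \in \square_n$ belongs to one of the shifted tiles $\theta_{km+U}\square$ meeting $\square_n$, of which there are $N + O(1)$. Since each $q_k$ transports entirely within its tile, the cost $\int_{\square_n \times \R} |x-y|^p \dif \tilde q(x,y)$ is bounded by $\sum_{k : \theta_{km+U}\square \cap \square_n \neq \emptyset} \sfW_p^p(\xi_k,\Leb_{\theta_{km}\square})$. Dividing by $|\square_n| = Nm$, the $O(1)$ boundary tiles contribute $o(1)$, and the bulk sum converges almost surely to $\E_{\sfP_\square}[\sfW_p^p(\xi,\Leb_\square)]/m$ by the strong law of large numbers applied to the i.i.d.\ tile costs. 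Taking $\limsup$ and then expectation and invoking the definition \eqref{eq:OT} yields the claimed inequality.

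The main technical points will be the measurable selection of the plans $q_k$ (standard since $\sfW_p$-optimal plans form a measurable set-valued map of the marginals) and the control of the boundary tiles in the ergodic step, both of which are routine in the stationary optimal transport framework of \cite{erbar2024optimaltransportstationarypoint}. The conceptual content is simply that tiling produces an i.i.d.\ structure whose total transport cost equals, per unit volume, the average per-tile cost.
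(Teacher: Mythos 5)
Your construction of the competitor coupling is exactly the one the paper uses: tile $\R$ by translates of $\square$, sample i.i.d.\ copies $\xi_k$ in each tile, place an optimal finite-volume plan $q_k$ in each tile, sum them, and then push the whole picture forward by the random uniform shift $(\theta_U,\theta_U)$. The only divergence is in the final estimate. The paper does not invoke the strong law of large numbers: it cites Corollary~2.7 of \cite{erbar2024optimaltransportstationarypoint} to replace the $\E[\inf_q\limsup_n\cdots]$ of definition~\eqref{eq:OT} by $\liminf_n\frac{1}{|\square_n|}\E\int_{\square_n\times\R}|x-y|^p\dif q^{\mathrm{stat}}$ for the chosen plan, then undoes the $U$-shift (noting $\theta_U\square_n\subset\square_{n+1}$), counts the $(2n+3)$ versus $(2n+1)$ tiles, and passes to the limit. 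Your route instead keeps the $\limsup$ inside the expectation and evaluates it almost surely via the SLLN on the i.i.d.\ per-tile costs, after which the expectation of the (a.s.\ constant) limit is immediate. Both are correct. The paper's version avoids the ergodic step entirely at the cost of leaning on a citation; yours is more self-contained but needs the extra bookkeeping you already flag: finiteness of the expected tile cost so the SLLN applies, equivariance of the constructed plan so that it is an admissible competitor, and control of the $O(1)$ boundary tiles (which the paper handles through the explicit $(2n+3)/(2n+1)$ ratio). Your remark that finiteness of the right-hand side forces $\sfP_\square$ to put all mass on configurations with exactly $|\square|$ points is correct and worth keeping, since that is implicitly what makes both the stationarized process have unit intensity and the per-tile optimal plans exist.
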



Let us now define what we call \textbf{well-ordered} matching, which is specific to the one-dimensional case. We say a matching $q = (\mathsf{id},T)\xi = \sum_{x\in\xi}\delta_{(x,T x)}$ of two measures $\xi$ and $\e$ is well ordered, if 
\eq{x\leq y \implies T x \leq T y,~\text{ almost surely.}} 
For example, in the case of a perturbed lattice, if $\sfW_1(\sfP,\sfP^\Z)<+\infty$, then there exists a matching $q = \sum_{j\in\Z}\delta_{(j+U,j+U+p_j)}$ between $\xi = \sum_{j\in\Z}\delta_{j+U}$, with $U$ uniform on $[0,1]$, and $\eta = \sum_{j\in\Z}\delta_{j+U+p_j}.$ In this case being well-ordered means that 
\eq{\forall j\in\Z,~p_j\leq 1+p_{j+1}.} We can also put the uniform part $U$ inside the local perturbations $p_j$.

\begin{lemma}\label{lem:wo}
	Let $\sfP,\sfR\in\pps$ be stationary point processes such that $\Wass_1(\sfP,\sfR)<+\infty$. Then there exists a matching $q$ between $\xi\sim\sfP$ and $\e\sim\sfR$ which is well ordered and with $L^1$ perturbations. If $\sfR = \sfP^\Z$, then the matching gives a perturbed lattice $\xi= \sum_{j\in\Z} \delta_{j+p_j}$ which has law $\sfP$, with perturbations $\{p_j\}_j$ $\Z$-stationary and with $L^1$ moments. 
\end{lemma}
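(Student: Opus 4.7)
Since $\Wass_1(\sfP,\sfR)<+\infty$, the equivariant reformulation \eqref{eq:rewrite} together with the existence result from \cite{erbar2024optimaltransportstationarypoint} provides an equivariant $L^1$-finite matching $q = (\id, T)\xi$ between $\xi\sim\sfP$ and $\e\sim\sfR$, satisfying
\eqn{\E\crch{\int_{\square_1\times \R} \abs{x-y}\dif q(x,y)} <+\infty.}
The strategy is to monotonize $T$ into a well-ordered matching via the classical one-dimensional uncrossing principle.

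Almost surely both supports are locally finite and unbounded in both directions, so they may be enumerated in increasing order as $\ens{x_k}_{k\in\Z}$ and $\ens{y_k}_{k\in\Z}$ (say with $x_0\leq 0<x_1$), and the matching reads $T(x_k)=y_{\sigma(k)}$ for some random bijection $\sigma:\Z\to\Z$. The elementary uncrossing inequality
\eqn{\abs{x_k - y_{\sigma(k')}} + \abs{x_{k'} - y_{\sigma(k)}} \leq \abs{x_k - y_{\sigma(k)}} + \abs{x_{k'} - y_{\sigma(k')}} \quad \text{whenever } k<k',\ \sigma(k)>\sigma(k'),}
ensures that swapping any crossing pair does not increase the $L^1$-cost (and is strict when the four points are distinct). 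I would define the well-ordered matching as $\tilde T(x_k):=y_{k+c}$, where the integer shift $c\in\Z$ is the asymptotic shift induced by $T$, identified measurably via a cumulative mass-balance argument: by the ergodic theorem applied to the invariant $\sigma$-algebra, the averaged quantity $\sigma(k)-k$ on growing translates of $\square_n$ converges $\P$-a.s., and the finite cost of $T$ forces this limit to be a finite integer. Rigorously, one can implement this by monotonizing $T$ on the finite restrictions $\xi\cap\square_n$ and letting $n\to+\infty$; the uniform cost bound guarantees convergence to $\tilde T$, which is monotone, inherits equivariance from $T$, and has $L^1$-cost bounded by that of $T$.

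For the perturbed-lattice case $\sfR = \sfP^\Z$, write $\e=\sum_{j\in\Z}\delta_{j+U}$ with $U$ uniform on $[0,1]$; after a trivial re-indexing, the enumeration reads $y_j=j+U$. Setting $p_j:=\tilde T^{-1}(j+U)-j$, we obtain $\xi=\sum_{j\in\Z}\delta_{j+p_j}$ with $\{p_j\}_j$ in $L^1$. The $\Z$-stationarity of $\{p_j\}_j$ follows from $\R$-stationarity of $\sfP$ together with the canonical nature of $\tilde T$: shifting $\xi$ by $1$ shifts the enumerations of both $\xi$ and $\e$ by $1$, yielding $\{p_{j-1}\}_j \stackrel{d}{=} \{p_j\}_j$. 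The main technical obstacle is the measurable identification of the shift $c$ and the rigorous control of the infinite-volume monotonization, which must be handled at the level of the underlying equivariant flow $(\Omega,\trib,\P,\theta)$ so that the swap operations preserve both stationarity of the joint law and the $L^1$-cost bound.
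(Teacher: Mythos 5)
Your proposal takes a genuinely different route from the paper, but it contains a gap that is not easily repaired.

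The paper's strategy is: introduce an auxiliary cost $h$ that is \emph{strictly convex}, radial, radially increasing, and pinched between $|x-y|-1$ and $|x-y|$; from $\Wass_1(\sfP,\sfR)<\infty$ deduce $\sfC_h(\sfP,\sfR)<\infty$; take an \emph{optimal} equivariant coupling $q=(\id,T)$ for $\sfC_h$ (which exists by the results of \cite{erbar2024optimaltransportstationarypoint}); show that optimality forces cyclic monotonicity of $T$ (by a tiling/swap contradiction argument); and then use strict convexity of $h$ to conclude that cyclic monotonicity is equivalent to well-ordering in one dimension. The bound $h\geq|x-y|-1$ then gives $L^1$ perturbations for free. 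The crucial point is that the well-ordered matching is obtained \emph{directly} as the optimizer for a strictly convex cost; no monotonization procedure is needed.

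Your proposal instead starts from an arbitrary equivariant $L^1$ matching $T$ and attempts to \emph{monotonize} it by identifying a deterministic integer shift $c$ and setting $\tilde T(x_k)=y_{k+c}$. This is where the gap lies. First, the finiteness of the $L^1$ cost of $T$ does not immediately give that $\sigma(k)-k$ has a finite ergodic average; the sequence $\{\sigma(k)-k\}_k$ is not an obviously $\Z$-stationary sequence to which the ergodic theorem applies (the indexing $k\mapsto x_k$ is itself configuration-dependent, and the well-definedness of such ergodic averages is, in the paper, a \emph{consequence} of having an $L^1$ well-ordered matching, not an ingredient for constructing one — your argument risks circularity). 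Second, and more importantly, even granting that $c$ exists, you still need to show that the matching $x_k\mapsto y_{k+c}$ has finite $L^1$ cost. This is essentially the statement that the discrepancy $\xi(I)-|I|$ is $L^1$-bounded uniformly over intervals $I$, which is a genuinely nontrivial result (it is the content of Theorem~1.3 of \cite{huesmann2015tranportestimatesrandommeasures}, invoked in the paper's ``alternate proof'' remark precisely because it is not elementary). Your appeal to ``monotonizing $T$ on finite restrictions $\xi\cap\square_n$ and letting $n\to\infty$'' does not close this: the finite restrictions of $\xi$ and $\eta$ to $\square_n$ need not have the same number of points, the per-box optimal shift need not stabilize without a discrepancy bound, and the non-strictness of the cost $|x-y|$ (the uncrossing inequality you cite is only $\leq$, never $<$, when the four points are collinear, which they always are in dimension one) means uncrossing does not single out a unique limit. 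The paper's use of a \emph{strictly} convex surrogate cost is exactly what circumvents all of this at once: the optimizer for $\sfC_h$ is automatically and uniquely well-ordered, with no limiting construction required.
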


The proofs of both lemmas can be found in Section \ref{sec:OT}.

\subsection{Rigidities}

We introduce three notions of rigidity for point processes. A point process is said to be \textbf{Number-Rigid} if the number of points in any bounded domain depends only on the exterior configuration:
\eq{\forall A \text{ bounded Borel set},~  \xi(A) \text{ is almost surely a measurable function of } \xi(A^c\cap\cdot).}  
A point process is said to be \textbf{Hyperuniform} if
\eq{\lim_{r\to+\infty} \frac{\var \, \xi(B_r)}{|B_r|} = 0,}  
where $B_r$ denotes the centered ball of radius $r$. This contrasts with the Poisson point process, for which the limit is strictly positive.  

A stationary point process $\xi$ of unit intensity is said to have a \textbf{Cyclic-Factor} if its law $\sfP\in\pps$ can be written as
\eq{\sfP = \int_0^1 \dif u~ \sfP_0\circ \theta_u^{-1},} 
where $P_0$ is not $\R$-stationary but is $\Z$-stationary (i.e., $\sfP_0 \circ \theta_j^{-1} = \sfP_0$ for all $j \in \Z$), and the supports of the phases $\{\sfP_0\circ \theta_u^{-1}\}_u$ are mutually disjoint. The most straightforward example is the stationarized lattice $\Z$, whose law is given by $\sum_{j\in\Z}\delta_{j+U}$ with $U$ uniform on $[0,1]$.

There exists no direct implication between these rigidities. For instance, in \cite{klatt2020stronglyrigidhyperfluctuatingrandom}, the authors provide a great example which is Number-Rigid but not Hyperuniform. See also \cite{simoncoste} for a condition giving Number-Rigidity and Hyperuniformity. 

\section{Results}\label{sec:Results}

\subsection{Number-Rigidity and Cyclic-Factor}

It is known that $\sfP$ satisfying $\Wass_1(\sfP,\Leb_\R)<+\infty$, or equivalently $\Wass_1(\sfP,\sfP^{\Z})<+\infty$, where $\sfP^{\Z}$ is the law of $\xi = \sum_{j\in\Z}\delta_{j+U}$ with $U$ uniform on $[0,1]$, implies that $\sfP$ is Hyperuniform; see \cite{dereudre2024nonhyperuniformityperturbedlattices}. We add two rigidity properties in this setting. 

\begin{theorem}[Rigidities in one dimension]\label{thm1}
	In dimension $d=1$, let $\sfP\in\pps$ be the law of a stationary point process. Under the assumption 
	\eq{\Wass_1(\sfP,\sfP^{\Z})<+\infty,} 
	the point process $\sfP$ is Number-Rigid and has a Cyclic-Factor. 
\end{theorem}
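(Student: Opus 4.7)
The plan is to apply Lemma~\ref{lem:wo} to realize $\xi \sim \sfP$ as a well-ordered perturbed lattice $\xi = \sum_{j\in\Z} \delta_{j+p_j}$, where $\{p_j\}_{j\in\Z}$ is $\Z$-stationary with $\E|p_0| < \infty$ and $p_j \leq 1 + p_{j+1}$. Both conclusions of the theorem will then rest on Birkhoff's ergodic theorem applied to the $\Z$-action on $\{p_j\}$; the central quantity will be the $\Z$-invariant conditional expectation $V := \E[p_0 \mid \mathcal{I}_\Z]$.

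For the Cyclic-Factor, I would define the phase map $U : \G \to [0,1)$ by
\[ U(\xi) := \Bigl(\lim_{N\to\infty} \tinv{N}\sum_{j=0}^{N-1}(x_j - j)\Bigr) \bmod 1, \]
which equals $V \bmod 1$ and is well-defined $\sfP$-a.s. A re-indexing computation under $\theta_t$ (writing $j' = j + \lfloor t\rfloor$) gives $U(\theta_t\xi) = (U(\xi) + t) \bmod 1$, and the integer-shift invariance of the Cesàro limit modulo $1$ ensures that $U(\xi)$ does not depend on the choice of labeling. By $\R$-stationarity of $\sfP$, the pushforward $U_*\sfP$ is Lebesgue on $[0,1)$. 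Disintegrating $\sfP$ along $U$ and setting $\sfP_0 := \sfP(\,\cdot\mid U=0)$, the equivariance forces $\sfP(\,\cdot\mid U=u) = \sfP_0 \circ \theta_u^{-1}$, so $\sfP = \int_0^1 \sfP_0 \circ \theta_u^{-1}\,\dif u$. The measure $\sfP_0$ is $\Z$-stationary (because $\theta_1$ preserves both $\sfP$ and the level set $\{U=0\}$), fails to be $\R$-stationary (the phase is pinned to $0$), and the supports are pairwise disjoint, since they concentrate on the disjoint level sets $\{U=u\}$.

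For Number-Rigidity, fix a bounded interval $[a,b]$ and canonically label the exterior points $\xi|_{[a,b]^c}$: let $\{y_k\}_{k\leq 0}$ be the points $<a$ in increasing order with $y_0$ the rightmost, and $\{y_k\}_{k\geq 1}$ the points $>b$ in increasing order. With $j_a := \max\{j : x_j < a\}$ and $j_b := \min\{j : x_j > b\}$, the well-ordered matching gives $y_k = x_{j_a+k}$ for $k\leq 0$ and $y_k = x_{j_b+k-1}$ for $k\geq 1$. Applying Birkhoff to $\{p_j\}$ then yields
\[ L := \lim_{N\to\infty} \tinv{N}\sum_{k=1}^N (y_{-k} + k) = j_a + V, \qquad R := \lim_{N\to\infty} \tinv{N}\sum_{k=1}^N (y_k - k) = (j_b - 1) + V, \]
so that $\xi([a,b]) = j_b - j_a - 1 = R - L$. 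Since $L$ and $R$ are defined purely from $\xi|_{[a,b]^c}$, this exhibits $\xi([a,b])$ as a measurable function of the exterior configuration, establishing Number-Rigidity.

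The main technical hurdle will be justifying the Birkhoff identities $L = j_a + V$ and $R = (j_b - 1) + V$ when the offsets $j_a, j_b$ are themselves random integers: the classical ergodic theorem gives convergence on a full-measure set for each fixed integer shift, and a countable-union argument is required to intersect these null sets over all integers and thereby handle the random offset. Beyond this, the equivariance of $U$, the disintegration along the phase, and the identification of exterior-labels are routine measure-theoretic verifications.
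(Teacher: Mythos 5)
Your proposal follows the paper's argument closely: both proofs realize $\xi$ as a well-ordered $L^1$ perturbed lattice via Lemma~\ref{lem:wo}, extract the bracketing indices and the global phase from one-sided Birkhoff averages, use the fractional part to obtain a uniform rotation-equivariant phase variable, and disintegrate $\sfP$ along that phase. The two points you flag as technical (the countable-union argument to handle the random index offsets, and the measure-theoretic verification that equivariance forces $\sfP(\cdot\mid U=u)=\sfP_0\circ\theta_u^{-1}$) are precisely where the paper spends its detail, but there is no genuine difference in approach.
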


Its proof is in Section \ref{sec:proofs}, see \ref{ssec:NR} for Number-Rigidity and \ref{ssec:CF} for Cyclic-Factor. 

\begin{rmk}[False contrapositive] 
	Both implications admit false contrapositives. The following example is Number-Rigid and has a Cyclic-Factor, but is not an $L^1$-perturbed lattice. Define $\sfP$ as the law of $\xi = \sum_{k\in\Z} N\delta_{Nk+U_N}$, where $N$ and $\{U_n\}_{n\in\N}$ are independent, each $U_n$ is uniform on $[0,n]$, and $N\in\N$ is distributed according to $\P(N=n)=a_n$ with $\sum a_n = 1$. The mean cost of transporting one particle of $\xi$ to $\Leb_\R$ is $N/2$. Thus, conditioning on $N$, 
	\eq{\Wass_1(\sfP,\Leb) \approx \sum_{n\in\N} a_n \tfrac{n}{2} = +\infty,} 
	for instance if $a_n\propto 1/n^2$. Trivially, $\xi$ is Number-Rigid. It also has a Cyclic-Factor, since its law coincides with that of $\xi' = \sum_{k\in\Z} N\delta_{kN+U+I_N}$ where $N$, $\{I_n\}$, and $U$ are independent, $U$ is uniform on $[0,1]$, and $I_n$ is uniform on $\{0,\ldots,n-1\}$. Alternatively, $U$ can be recovered as the fractional part of any particle coordinate in $\xi$. 
\end{rmk}

\begin{rmk}[No equivalent in dimension $d\geq 2$]
	Proving $\Wass_p(\sfP,\Leb_{\R^d})<+\infty$ shows that $\sfP$ is an $L^p$-perturbed lattice, with the lattice of the form $\Z^d+U$, where $U$ is uniform on $[0,1]^d$. However, it does not seem straightforward to detect the global shift $U$ from a single configuration. This distinction is specific to the one-dimensional case, where crystallization can be recovered thanks to the intrinsic order and rigidity of $\R$. A similar implication in dimension $d=2$ — namely, that $\Wass_p(\sfP,\Leb_{\R^2})<+\infty$ implies Cyclic-Factor — is not expected to hold, since the infinite Ginibre ensemble is an $L^p$-perturbed lattice for all $p$ (application of the results of \cite{butez2024wassersteindistancehyperuniformpoint}) but is not crystallized (it has exponential decay of correlations). Regarding Number-Rigidity, such a result cannot hold once $d\geq 3$, simply because the Poisson point process is an $L^p$-perturbed lattice for all $p$ (application of Lemma \ref{lem:Wass} and the AKT theorem). In dimension $d=2$, the situation is unclear: it may be that $\Wass_2(\sfP,\Leb_{\R^2})<+\infty$ implies Number-Rigidity.
\end{rmk}

\begin{rmk}[Alternate proof]
	There is an alternate proof of the Cyclic-Factor property, obtained by combining two results from the literature. First, as stated in \cite{aizenman2000boundedfluctuationstranslationsymmetry} (Theorem~2.1), if the discrepancies $\ens{\left|\xi(I)-|I|\right|}_{\text{interval } I}$ form a tight family of random variables, then $\sfP$ has a Cyclic-Factor. Second, as shown in \cite{huesmann2015tranportestimatesrandommeasures} (Theorem~1.3), if $\Wass_1(\sfP,\Leb_\R)<+\infty$, then the expectations $\ens{\E\crch{\abs{\xi(I)-|I|}}}_{\text{interval } I}$ are uniformly bounded, which implies tightness of the discrepancies. The proof of the result in \cite{aizenman2000boundedfluctuationstranslationsymmetry} relies on \textit{cocycles} in ergodic theory, and the proof of the result in \cite{huesmann2015tranportestimatesrandommeasures} uses the refined Campbell formula and the \textit{shift-coupling inequality}. Our proof only requires the existence of well-ordered matching and the ergodic theorem for $\Z$-stationary sequences of random variables. It is also simpler and more intuitive. Moreover it simultaneously establishes Number-Rigidity — a result that, to our knowledge, is absent from the existing literature. 
\end{rmk}

\subsection{Application to Riesz gas}
Riesz gases form a wide family of models, depending on the dimension $d$, the homogeneity parameter $s$, and the inverse temperature $\beta$. There exists a large literature on Riesz gases, which includes well-known special cases such as the Coulomb case $s=d-2$, log-gases, the 1DOCP, etc. We refer the reader to \cite{LewinSurvey2022,serfaty2024lecturescoulombrieszgases} for extensive accounts. In this paper, we restrict to negative homogeneity parameters $s<0$. 

We first define the Riesz gas in finite volume. For $s<0$, the Riesz potential is defined by 
\eq{g_s(x-y)=-|x-y|^{-s},} 
which is homogeneous, radial, and repulsive (note the minus sign, which is not present for $s>0$). Notice also that the potential is defined at the origin; these are the \textit{non-singular} Riesz gases. In a domain $\square_n = [-\frac{n}{2},+\frac{n}{2}]$ of finite volume $n$, the Hamiltonian of the Riesz gas with $n$ particles is defined as 
\eq{H_n^{(s)}(\g) = \frac{1}{2}\int\int_{\square_n\times\square_n} g_s(x-y) \nu(\dif x)\nu(\dif y),\quad \text{with }\nu = \g-\Leb_{\square_n}.} 
This energy consists of $n$ particles of positive unit charge interacting via the Riesz potential, together with a background of negative unit charge uniformly distributed on $\square_n$. The background exactly cancels the global charge, i.e.\ $\nu(\R) = \nu(\square_n) = 0$. Expanding gives 
\eq{H_n^{(s)}(\g) = \sum_{x,y\in\g} g_s(x-y)-\sum_{x\in\g}\int_{y\in\square_n}g_s(x-y)\dif y +\frac{1}{2}\iint_{\square_n\times\square_n} g_s(x-y)\dif x\dif y,\label{eq:hamiltonian2}} 
corresponding respectively to particle-particle, particle-background, and background-background interactions (the last one being a constant, included for simplicity). The Gibbs measure defining the Riesz gas in finite volume is 
\eq{\sfP_{n}^{s,\beta}(\dif\g) = \frac{1}{Z_n^{(s,\beta)}}~e^{-\beta H_n^{(s)}(\g)}~\Binom_{n,\square_n}(\dif\g),} where $Z_n^{(s,\beta)} =\int  e^{-\beta H_n^{(s)}(\g)}\Binom_{n,\square_n}(\dif\g)$ is a normalizing constant. We prove the existence of infinite-volume Riesz gases using specific entropy methods, which are presented in Appendix \ref{sec:specificentropy} for the sake of completeness. Recall that stationarized point processes, using the tiling procedure, are defined in Subsection \ref{ssec:tiling}, and that the local topology is defined in Subsection \ref{ssec:pointprocesses}.

\begin{theorem}[Existence of Riesz gas]\label{thm:existence}
	Let $-2<s<0$ and $\beta>0$. The sequence of stationarized point processes $\ens{\sfP_n^{s,\beta,\mathrm{stat}}}_n$ admits accumulation points for the local topology (which is stronger than the weak topology). A priori, uniqueness is not guaranteed. These accumulation points are called Riesz gases.
\end{theorem}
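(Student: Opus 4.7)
The plan is to follow the classical compactness scheme for Gibbs measures based on specific entropy, adapted to the setting of stationarized finite-volume laws. The key observation is that the local topology on $\pps$ admits compact sub-level sets for the specific entropy $\Ents(\cdot \| \Poi)$ with respect to the Poisson process; this is a standard fact (see Appendix \ref{sec:specificentropy}). Consequently, to obtain accumulation points it suffices to establish a uniform bound
\eq{\Ents(\sfP_n^{s,\beta,\mathrm{stat}} \| \Poi) \leq C(s,\beta), \quad \forall n \geq 1.}
Once this bound is in hand, compactness delivers a subsequential local limit, which one takes as the definition of an infinite-volume Riesz gas.

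To derive such an entropy bound, I would exploit the tiling construction of Subsection~\ref{ssec:tiling}. Because $\sfP_n^{s,\beta,\mathrm{stat}}$ is obtained by tiling independent copies of $\sfP_n^{s,\beta}$ on the grid $n\Z$ and then randomizing by a uniform shift, the specific entropy decouples as
\eq{\Ents(\sfP_n^{s,\beta,\mathrm{stat}} \| \Poi) = \frac{1}{n}\Ent\bigl(\sfP_n^{s,\beta} \,\big\Vert\, \Poi_{\square_n}\bigr) + o(1),}
up to Stirling-type corrections from the $n$-point conditioning (the reference measure $\Binom_{n,\square_n}$ has Radon–Nikodym derivative $\frac{n! e^{n}}{n^n}\mathbf{1}_{|\gamma|=n}$ with respect to $\Poi_{\square_n}$, contributing $O(\log n)$). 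Using the Gibbs form of $\sfP_n^{s,\beta}$ I then compute
\eq{\Ent\bigl(\sfP_n^{s,\beta} \,\big\Vert\, \Binom_{n,\square_n}\bigr) = -\beta\, \E_{\sfP_n^{s,\beta}}\!\bigl[H_n^{(s)}\bigr] - \log Z_n^{(s,\beta)}.}
It then suffices to bound $-\log Z_n^{(s,\beta)} \leq C n$ and $-\beta \E_{\sfP_n^{s,\beta}}[H_n^{(s)}] \leq C n$.

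For the lower bound $\log Z_n^{(s,\beta)} \geq -Cn$, I would apply Jensen's inequality under $\Binom_{n,\square_n}$, which reduces the question to showing $\E_{\Binom_{n,\square_n}}[H_n^{(s)}] \leq C n$; the relevant double integrals $\iint_{\square_n^2} |x-y|^{|s|} \dif x \dif y$ are easily seen to scale like $n^{2+|s|}$, but the neutrality of $\nu$ makes the dominant terms cancel, leaving a contribution of order $n$ in the regime $-2<s<0$. For the upper bound on $-\beta \E[H_n^{(s)}]$, I would rely on stability of the energy: writing $H_n^{(s)}(\gamma) = \tfrac{1}{2}\iint g_s(x-y)\nu(\dif x)\nu(\dif y)$ with $\nu = \gamma - \Leb_{\square_n}$ a zero-mass signed measure, and using that for $-2<s<0$ the Riesz potential $-|x|^{-s}$ is conditionally positive definite (its Fourier representation gives a positive spectral density on neutral measures), one obtains $H_n^{(s)}(\gamma) \geq -Cn$ for all $\gamma$, with the constant coming from the short-range self-energy of the smoothed charges or from a neutralization trick.

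The main obstacle is this stability estimate: although $g_s$ is bounded and smooth away from $0$ for $s<0$, the integrals defining $H_n^{(s)}$ involve a growing potential $|x-y|^{|s|}$, so stability is not a trivial cancellation and genuinely uses the Fourier/neutrality structure. Once stability and the Jensen bound are secured, combining the two estimates yields the uniform bound on specific entropy, and the theorem follows by the compactness of sub-level sets of $\Ents(\cdot \| \Poi)$ in the local topology. The fact that the local topology is strictly stronger than the weak one is then automatic from the general framework.
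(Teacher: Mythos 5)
Your overall architecture matches the paper's: bound the specific entropy $\Ents{\sfP_n^{s,\beta,\mathrm{stat}}}{\Poi}$ uniformly in $n$, invoke compactness of entropy sub-level sets (Proposition~\ref{prop:ents}), and pass to the stationarized law via a tiling inequality (Lemma~\ref{lem:ents}; note this is an inequality, not your claimed equality-up-to-$o(1)$, but the inequality suffices). The reduction to the two bounds $-\log Z_n^{(s,\beta)}\le Cn$ and $-\beta\,\E_{\sfP_n}[H_n^{(s)}]\le Cn$ is also the correct skeleton, and your use of the Fourier/positive-definiteness structure for the second bound is fine (in fact the paper gets the cleaner $H_n^{(s)}\geq 0$ directly from \eqref{eq:Fourier}, making the bound trivial).

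However, there is a genuine gap in your treatment of the partition-function lower bound. You apply Jensen under $\Binom_{n,\square_n}$ and claim that neutrality forces $\E_{\Binom_{n,\square_n}}\bigl[H_n^{(s)}\bigr]\le Cn$. This is false. Under $\Binom_{n,\square_n}$ the points are i.i.d.\ uniform and a direct second-moment computation gives
\eq{\E_{\Binom_{n,\square_n}}\bigl[H_n^{(s)}\bigr] \;=\; -\frac{1}{2n}\iint_{\square_n\times\square_n} g_s(x-y)\,\dif x\,\dif y \;=\; \frac{1}{2n}\iint_{\square_n\times\square_n}|x-y|^{|s|}\,\dif x\,\dif y \;\asymp\; n^{1+|s|},}
which is strictly superlinear for every $s<0$. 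Global neutrality of $\nu$ only buys you one factor of $n$ (bringing $n^{2+|s|}$ down to $n^{1+|s|}$), not the full reduction to $O(n)$. Consequently Jensen under the binomial reference yields $\log Z_n^{(s,\beta)}\gtrsim -n^{1+|s|}$, which is far too weak to give $\Ents \leq C$. The paper's Lemma~\ref{lem:partfunction} fixes exactly this by using the cell method: one restricts the Jensen/integration step to configurations with exactly one particle per unit cell $\square_n^{(j)}$. Such configurations are locally neutral, and the particle–particle, particle–background, and background–background terms then telescope cell by cell, leaving only the $n$ self-cell terms, each $O(1)$, hence $\log Z_n^{(s,\beta)}\geq -Cn$. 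You need to replace your Jensen-under-binomial argument with this (or an equivalent local-neutrality) device; without it the uniform entropy bound, and hence the compactness conclusion, does not follow.
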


Our main theorem regarding Riesz gases for $-2<s\leq -1$ is as follows. It contains the case $s=-1$, which is the classical 1D Coulomb model (also called 1D one component plasma, or Jellium), a very studied and elegant model of statistical physics \cite{Baxter_1963,Kunz74, AizenmanMartin, aizenman2000boundedfluctuationstranslationsymmetry}.

\begin{theorem}[Rigidities of Riesz gases]\label{thm2}
	One-dimensional non-singular Riesz gases $\{\sfP_{s,\beta}\}$ with homogeneity parameter $-2<s \leq -1$, at any inverse temperature $\beta>0$, are $L^2$-perturbed lattices, 
	\eq{\Wass_2(\sfP_{s,\beta},\sfP^\Z)<+\infty.} 
	In particular, these non-singular Riesz gases are Hyperuniform of type I, meaning $\ens{\var_{\sfP_{s,\beta}} N_I}_{I \text{ interval}}$ is bounded. They are also Number-Rigid and have a Cyclic-Factor.
\end{theorem}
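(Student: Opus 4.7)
My plan is to establish the master estimate $\Wass_2(\sfP_{s,\beta},\sfP^\Z)<+\infty$; the remaining conclusions then follow — Number-Rigidity and Cyclic-Factor by Theorem \ref{thm1} (since $\Wass_2<+\infty$ entails $\Wass_1<+\infty$ by Jensen), and Hyperuniformity of type~I by a standard second-moment argument applied to the $\Z$-stationary, $L^2$-bounded perturbations produced by Lemma \ref{lem:wo}. In finite volume, Lemma \ref{lem:Wass} reduces the task to a uniform bound on $\frac{1}{n}\E_{\sfP_n^{s,\beta}}\crch{\sfW_2^2(\gamma,\Leb_{\square_n})}$, and lower semi-continuity of $\Wass_2$ on the local topology transfers the bound to any accumulation point.

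The key pointwise identity, valid because $\gamma=\sum_{i=1}^n\delta_{x_i}$ and $\Leb_{\square_n}$ have the same integer mass $n$, is obtained by computing the monotone $1$D transport: writing $\nu=\gamma-\Leb_{\square_n}$ and $D(t)=\nu((-\infty,t])$, one finds $\sfW_2^2(\gamma,\Leb_{\square_n}) = \int_\R D(t)^2\,\dif t$. Using the tautology $|x-y|=\int_\R(\mathbbm{1}_{t\leq x}-\mathbbm{1}_{t\leq y})^2\,\dif t$ together with the zero-charge condition $\nu(\R)=0$, this equals $-\tfrac{1}{2}\iint|x-y|\,\dif\nu\,\dif\nu = H_n^{(-1)}(\gamma)$, so for $s=-1$ the identity $\sfW_2^2 = H_n^{(-1)}$ is exact at the configuration level.

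For $-2<s<-1$, set $\alpha=-s\in(1,2)$ and compare via the Fourier representations $\int D^2 = c\int|\hat\nu|^2/|\xi|^2\,\dif\xi$ and $H_n^{(s)}(\gamma) = c_\alpha\int|\hat\nu|^2/|\xi|^{\alpha+1}\,\dif\xi$. Since $\alpha\geq 1$, the weight inequality $|\xi|^{-2}\leq|\xi|^{-\alpha-1}$ holds on $\{|\xi|\leq 1\}$; splitting at $|\xi|=1$ yields $\sfW_2^2(\gamma,\Leb_{\square_n})\leq C_s H_n^{(s)}(\gamma) + R_n$, where the high-frequency remainder $R_n := \int_{|\xi|\geq 1}|\hat\nu|^2/|\xi|^2\,\dif\xi$ has expectation $\E[R_n]=O(n)$ via pointwise control of the structure factor $\E[|\hat\nu(\xi)|^2]$ coming from the pair correlation function. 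To close the loop, $\E[H_n^{(s)}(\gamma)]=O(n)$ uniformly in $n$ follows from the Gibbs variational principle $\beta\,\E[H_n^{(s)}]\leq-\log Z_n^{(s,\beta)}$ together with the extensivity bound $-\log Z_n^{(s,\beta)}\leq C_{s,\beta}n$, obtained by a standard test-configuration argument (restricting the partition function to a small neighborhood of the equally spaced lattice, whose Hamiltonian is itself $O(n)$). Combining these, $\frac{1}{n}\E[\sfW_2^2]\leq C_{s,\beta}$ uniformly in $n$, giving $\Wass_2(\sfP_{s,\beta},\Leb_\R)<+\infty$ and hence $\Wass_2(\sfP_{s,\beta},\sfP^\Z)<+\infty$ by the triangle inequality (with $\Wass_2(\Leb_\R,\sfP^\Z)$ trivially finite).

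The main technical obstacle is the rigorous treatment of the Fourier remainder $R_n$: since $\nu$ is a measure and not an $L^2$ function, the Fourier integrals require careful regularization or a switch to discrete Fourier analysis adapted to $\square_n$. A possibly cleaner alternative route, still exploiting the restriction $\alpha\geq 1$ via the convexity of $t\mapsto|t|^\alpha$, uses the explicit decomposition $\sfW_2^2(\gamma,\Leb_{\square_n}) = \tfrac{n}{12} + \sum_{i=1}^n \epsilon_i^2$ (where $\epsilon_i$ is the deviation of the $i$-th ordered particle from its lattice site) and controls $\E[\epsilon_i^2]$ directly from the energy; this is precisely where the threshold $s=-1$ becomes decisive, since convexity of the Riesz potential fails below it.
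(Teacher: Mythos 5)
Your overall architecture coincides with the paper's: reduce via Lemma~\ref{lem:Wass} to a uniform bound on $\frac{1}{n}\E_{\sfP_n^{s,\beta}}[\sfW_2^2(\g,\Leb_{\sqn})]$, pass to infinite volume by lower semi-continuity of $\Wass_2$, control $\sfW_2^2$ by $H_n^{(-1)}$, split the Fourier integral $\int \abs{\widehat\nu}^2/\abs{\xi}^2$ at $\abs\xi=1$ using the weight comparison $\abs\xi^{-2}\le\abs\xi^{-(1-s)}$ on the low-frequency part, and finally bound $\E[H_n^{(s)}]=-\partial_\beta\log Z_n^{(s,\beta)}$ by convexity of $\beta\mapsto\log Z_n^{(s,\beta)}$ together with the extensivity bound $-\log Z_n\le Cn$ (your ``test-configuration'' argument matches Lemma~\ref{lem:partfunction}). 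The identification of $H_n^{(-1)}$ with $\int D^2$ is correct, and the closing steps (Theorem~\ref{thm1} for Number-Rigidity and Cyclic-Factor, type-I Hyperuniformity from $L^2$ perturbations) are as in the paper.

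However, there is one genuine gap that you yourself flag but do not close: the high-frequency remainder $R_n=\int_{\abs\xi\ge1}\abs{\widehat\nu(\xi)}^2/\abs\xi^2\,\dif\xi$. You propose to bound $\E[R_n]$ by $O(n)$ ``via pointwise control of the structure factor $\E[\abs{\widehat\nu(\xi)}^2]$ coming from the pair correlation function,'' but no such control of the two-point function of $\sfP_n^{s,\beta}$ is available a priori --- obtaining decay of correlations for a long-range Gibbs measure is at least as hard as the rigidity statement one is trying to prove, so this cannot be invoked as a black box. The paper instead closes the loop \emph{deterministically}, not in expectation: on $\abs\xi>1$ it bounds the integrand by the Fourier symbol of $e^{-\abs x}$, so $R_n\lesssim H_{n,\exp}(\g)$ pointwise in $\g$; then $H_{n,\exp}(\g)\lesssim\sum_j N_j^2$ by binning into unit cells; then $\sum_j N_j^2\lesssim H_{n,\mathrm{gauss}}(\g)$ (the Gaussian kernel sees nearest-neighbor pair counts); and finally $H_{n,\mathrm{gauss}}(\g)\lesssim H_n^{(s)}(\g)$ via the global Fourier comparison $e^{-\abs\xi^2/2}\le C/\abs\xi^{1-s}$. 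This chain converts the entire remainder back into $H_n^{(s)}$, so a single expectation bound $\E[H_n^{(s)}]\le Cn$ suffices. You would need to import or reconstruct this chain (or an equivalent device, e.g.\ a direct Cauchy--Schwarz on the binned discrepancy) to make your argument complete. A secondary remark: your closing heuristic that the threshold $s=-1$ is ``where convexity of the Riesz potential fails'' is misleading --- $g_s(t)=-\abs t^{\alpha}$ is \emph{concave}, not convex, for $\alpha=-s\ge1$; the correct mechanism at $s=-1$ is the Fourier weight comparison $\abs\xi^{-2}\le\abs\xi^{-(1-s)}$ for $\abs\xi\le1$, which is what you in fact use.
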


The proofs of these two theorems, and all details regarding these Riesz gas, can be found in Section \ref{sec:Riesz}. As mentioned in the introduction (see Figure \ref{fig:Riesz1D}), it is expected that a phase transition in temperature (between crystalline and quasi-solid regimes) occurs for the Riesz gas with $-1<s<0$, as discussed in \cite{LewinSurvey2022} and observed numerically in \cite{lelotte2023phasetransitionsonedimensionalriesz}. Such a result has not yet been proved rigorously. A central open question is therefore: how can one show that these Riesz gases form crystals at low temperature (depending on $s$)? Our conjecture is as follows. It would imply all three rigidity properties we introduced, with the Cyclic-Factor corresponding to crystallization.

\begin{conj}
	For $-1<s<0$, there exists $\beta_0(s)>0$, decreasing in $s$, such that for $\beta>\beta_0(s)$ we have $\Wass_1(\sfP_{s,\beta},\Leb_\R)<+\infty$, whereas for $0<\beta<\beta_0(s)$ we have $\Wass_1(\sfP_{s,\beta},\Leb_\R)=+\infty$.
\end{conj}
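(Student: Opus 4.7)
The plan is to treat the two directions separately and to merge them via a monotonicity step. Setting $R(s,\beta) := \Wass_1(\sfP_{s,\beta},\Leb_\R)$ and $\beta_0(s) := \inf\{\beta>0 : R(s,\beta)<+\infty\}$, the conjecture amounts to $0 < \beta_0(s) < +\infty$ together with the monotonicity of $s \mapsto \beta_0(s)$, which reflects the intuition that a stiffer potential at infinity (i.e.\ $s$ closer to $-1$) promotes crystallization at a lower inverse temperature.

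For the low-temperature direction ($\beta$ large, $R<+\infty$), I would carry out a spin-wave analysis around the lattice minimizer. At unit density the infinite-volume specific energy should be minimized (uniquely modulo translation) by the lattice $\Z+t$, so at large $\beta$ the Gibbs measure is expected to concentrate near shifted lattices. Writing $\xi = \sum_j \delta_{j+U+p_j}$ linearizes the excess energy around $p_j=0$ as $\tfrac{1}{2}\sum_{i\neq j} g_s''(i-j)(p_i-p_j)^2$, whose Fourier symbol $\sum_{j\neq 0} g_s''(j)(1-\cos(jk))$ behaves like $|k|^{s+1}$ near $k=0$ when $s\in(-1,0)$. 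The rigorous step is to upgrade this heuristic into a uniform bound on discrepancies: by the shift-coupling inequality of \cite{huesmann2015tranportestimatesrandommeasures}, in dimension one $R<+\infty$ is essentially equivalent to $\sup_I \E\abs{\xi(I)-\abs{I}}<+\infty$, so it suffices to establish this uniform bound for $\beta$ large enough. I would combine a coarse-graining argument (using the local convexity $g_s''>0$ on $(-1,0)$) with a Peierls-type estimate at long scales that exploits the Fourier lower bound $|k|^{s+1}$ to rule out macroscopic dislocations.

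For the high-temperature direction ($\beta$ small, $R=+\infty$), the same equivalence reduces matters to proving $\E\abs{\xi([0,L])-L}\to +\infty$ as $L\to+\infty$ for small $\beta$. I would develop a high-temperature cluster expansion for the stationarized Gibbs measure: although the pair potential $-|x|^{-s}$ is long-range, the neutralizing background in the Hamiltonian \eqref{eq:hamiltonian2} produces a screened effective interaction whose truncated correlation functions should be absolutely summable at sufficiently small $\beta$. Quantitative convergence of such an expansion would transfer the Poisson-like growth $\E\abs{\xi([0,L])-L}\sim c\sqrt{L}$ from the $\beta=0$ baseline to a whole range $(0,\beta_1(s))$, yielding $\beta_0(s) \geq \beta_1(s) > 0$.

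To merge the two thresholds into a single critical value, I would establish monotonicity of $\{\beta : R(s,\beta)<+\infty\}$ in $\beta$, ideally through a DLR coupling along which the Gibbs measure is stochastically more concentrated around the lattice as $\beta$ increases, so that finiteness at some $\beta$ propagates to all larger $\beta$. The monotonicity $s\mapsto \beta_0(s)$ should follow by comparing Fourier symbols: as $s$ decreases toward $-1$, $|k|^{s+1}$ becomes more singular at $0$, lowering the spin-wave threshold. The decisive obstacle is unquestionably the low-temperature side: proving a rigorous crystallization, or even a sharp spin-wave lower bound, for one-dimensional long-range gases sits at the current frontier and would constitute a genuine breakthrough; by contrast, the high-temperature side, though delicate because of the long-range interaction, should be within reach of modern cluster-expansion techniques once the background's screening role is properly exploited.
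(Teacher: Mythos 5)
The statement you are addressing is not a theorem of the paper: it is stated as a \emph{conjecture}, explicitly left open (the authors write that crystallization for $-1<s<0$ ``has not yet been proved rigorously''), and the paper contains no proof of it. Your text is accordingly not a proof either, and you acknowledge as much; it is a research program in which every load-bearing step is left unestablished. Concretely: (i) the low-temperature direction rests on a spin-wave linearization plus an unspecified ``Peierls-type estimate at long scales''; turning the heuristic symbol $|k|^{s+1}$ into a rigorous lower bound on the Gibbs energy of dislocations for a one-dimensional long-range gas is exactly the open problem, not a step you can invoke. (ii) The reduction to discrepancies is only half-available: \cite{huesmann2015tranportestimatesrandommeasures} gives $\Wass_1(\sfP,\Leb_\R)<+\infty \Rightarrow \sup_I \E\abs{\xi(I)-\abs{I}}<+\infty$, but your program uses the converse (uniform discrepancy bounds imply finite $\Wass_1$), which you assert as ``essentially equivalent'' without argument; this converse is nontrivial and would itself need a proof (or a different criterion, e.g.\ the paper's Lemma \ref{lem:Wass} route through finite-volume transport estimates). (iii) The high-temperature direction assumes a convergent cluster expansion with screening for a non-summable pair interaction plus background; no such expansion is constructed, and even granting it, you would still need to show that summable truncated correlations force $\E\abs{\xi([0,L])-L}\to+\infty$, i.e.\ that the transport cost is actually infinite, which does not follow merely from ``Poisson-like'' square-root fluctuations without a quantitative lower bound surviving the infinite-volume accumulation-point construction. (iv) The merging step requires monotonicity in $\beta$ of the set $\{\beta:\Wass_1(\sfP_{s,\beta},\Leb_\R)<+\infty\}$; no correlation inequality or stochastic-domination structure is known for these models that would deliver this, and without it you only get two thresholds, not a single $\beta_0(s)$. (v) The claimed monotonicity of $s\mapsto\beta_0(s)$ is again a comparison of heuristic symbols, not an argument. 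In addition, there is an ambiguity you would have to resolve before any of this makes sense: the infinite-volume Riesz gas is defined only as an accumulation point (Theorem \ref{thm:existence} gives no uniqueness), so the very quantity $\Wass_1(\sfP_{s,\beta},\Leb_\R)$ is not well defined as a function of $(s,\beta)$ without specifying how the statement should be read across all accumulation points.

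In short: the conjecture remains open, the paper offers no proof to compare against, and your proposal, while physically reasonable and aligned with the expected phase diagram of Figure \ref{fig:Riesz1D}, does not close any of the gaps above; the decisive one is the rigorous low-temperature crystallization bound, which you yourself flag as a breakthrough-level problem.
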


\section{Proofs of Theorem \ref{thm1}}\label{sec:proofs}

\subsection{Number-Rigidity}\label{ssec:NR} 

This subsection contains the proof of the first part of Theorem \ref{thm1}, which states that a point process $\sfP$ satisfying $\Wass_1(\sfP,\sfP^\Z)<+\infty$ is Number-Rigid. The core idea is to use the ergodic theorem. It suffices to prove number-rigidity for domains of the form $D = [a,b]$ with $a<b$. Indeed, if $A$ is any bounded domain, choose an interval $D$ such that $A\subseteq D$. Then $N_A = N_D-N_{D\setminus A}$, where $N_D$ depends only on $\xi_{D^c}$ (hence only on $\xi_{A^c}$), and $N_{D\setminus A}$ also depends only on $\xi_{A^c}$.

Now let $D = [a,b]$ be the domain. Using Lemma \ref{lem:wo}, we can write $\xi = \sum_{j\in\Z}\delta_{j+p_j}$ with $L^1$ perturbations, and well-ordered particles, meaning $\ens{j+p_j}_j$ is an increasing sequence. Let
\al{I & = \min\ens{j\in\Z,~j+p_j>b},\\ J& =\max\ens{j\in\Z,~j+p_j<a}.}
These random variables are almost surely well-defined. Indeed, the first moment assumption on perturbations $\E|p_0|<+\infty$ implies
\eq{\sum_{j\geq 0} \P\pr{|p_0|\geq j-a} = \sum_{j\geq 0} \P\pr{|p_j|\geq j-a}<+\infty,}
since the $p_j$ all have the same law as $p_0$ by $\Z$-stationarity. By the first Borel–Cantelli lemma, $|p_j|\le j-a$, and thus $a\le p_j+j$ for all sufficiently large $j$, which implies that $J$ exists; the same argument gives he existence of $I$. The random variable $I$ (resp.\ $J$) gives the index of the particle immediately to the right (resp.\ left) of the domain $[a,b]$. Let $X_j = j+I+p_{j+I}$ for all $j\geq 0$, which are the particles strictly to the right of $[a,b]$, and similarly let $Y_j = j+J+p_{j+J}$ for all $j\leq 0$, which are the particles strictly to the left of $D$ (indexed negatively). Then, applying the ergodic theorem twice,
\al{\frac{1}{n}\sum_{j=0}^{n-1}\pr{X_j-j}  = I+ \frac{1}{n}\sum_{j=0}^{n-1}p_{j+I} &\xrightarrow[n\to+\infty]{(a.s.)} I + \E\crch{p_0~|~\mathscr{I}_\Z} \\ 
	\frac{1}{n}\sum_{j=-n+1}^{0}\pr{Y_j-j}  = J+ \frac{1}{n}\sum_{j=-n+1}^{0}p_{j+J} &\xrightarrow[n\to+\infty]{(a.s.)} J + \E\crch{p_0~|~\mathscr{I}_\Z}.}
The ergodic theorem gives almost sure convergence, and the fact that $I$ and $J$ are random does not affect this (they are finite almost surely). We denote by $\mathscr{I}_\Z$ the $\Z$-invariant sigma-algebra on $\R^\Z$.

Note that $\frac{1}{n}\sum_j (X_j-j)$ (resp.\ $\frac{1}{n}\sum_j (Y_j-j)$) is measurable with respect to the exterior of $[a,b]$, since it only uses the positions of particles strictly to the right (resp.\ left) of $[a,b]$ and subtracts their indices. Hence these averages and their limits are measurable with respect to $\xi_{[a,b]^c}$. Subtracting the two limits shows that $I-J$ is measurable with respect to the configuration outside $[a,b]$, and therefore the number of points in $[a,b]$ depends only on $\xi_{[a,b]^c}$. This proves number-rigidity. 

Note that the only place where we used the well-ordered matching is here: without it, $I-J$ could be unrelated to the number of points in the interval.

\subsection{Cyclic-Factor}\label{ssec:CF}

This subsection contains the proof of the second part of Theorem \ref{thm1}, which states that a point process $\sfP$ satisfying $\Wass_1(\sfP,\sfP^\Z)<+\infty$ is a Cyclic-Factor. We use the same core idea (ergodic averages) to identify the global shift. 

Using Lemma \ref{lem:wo}, we can write $\xi = \sum_{j\in\Z}\delta_{j+p_j}$ with $L^1$ perturbations, and well-ordered particles, meaning $\ens{j+p_j}_j$ is an increasing sequence. Let us set
\eq{I = \min\ens{j\in\Z,~j+p_j>0},}
and let $X_j = j+I+p_{j+I}$ for all $j\geq 0$, the particles strictly to the right of the origin. Then
\eq{\frac{1}{n}\sum_{j=0}^{n-1}(X_j-j) = I +\frac{1}{n}\sum_{j=0}^{n-1}p_{j+I}\xrightarrow[n\to+\infty]{(a.s.)} I + \E\crch{p_0~|~\mathscr{I}_\Z}.}
Hence the ergodic limit
\eq{\Phi(\xi) := \lim_{n\to+\infty} \frac{1}{n}\sum_{j=0}^{n-1}(X_j-j) = I + \E\crch{p_0~|~\mathscr{I}_\Z} =: I+A,\quad \text{almost surely,}}
which means we can recover the global shift $A = \E\crch{p_0~|~\mathscr{I}_\Z}$ by observing $\xi$. Denote the fractional part by $\ens{t} = t-\lfloor t \rfloor$. Then
\eq{\label{eq:recoverShift}\ens{\Phi(\xi)} = \ens{I+A} = \ens{A} =: U, \quad \text{almost surely,}}
because $I\in\Z$.

Let us examine how this behaves under translations. After shifting by $u\in\R$, the new index is
\eq{I^u = \min\ens{j\in\Z,~j+u+X_j\geq 0},}
and the corresponding particles are $X_j^u = j+I^u+X_{j+I^u}$ for $j\geq 0$. The ergodic theorem gives
\aln{\Phi(\theta_u\xi) & = \lim_{n\to+\infty}\frac{1}{n}\sum_{j=0}^{n-1} (X_j^u-j) \\ 
	&= \lim_{n\to+\infty} I^u+u+\inv{n}\sum_{j=0}^{n-1} p_{j+I^u} \\ 
	&= I^u+u+A.}
Thus
\eq{\Phi(\theta_u\xi) = \Phi(\xi)+u+I^u-I,~\forall u\in\R,}
almost surely. Since $I^u-I\in\Z$, we obtain
\eq{\label{eq:phiShift}\ens{\Phi(\theta_u\xi)} = \ens{\Phi(\xi)+u},~\forall u\in\R.}
By $\R$-stationarity of $\sfP$, the law of $\Phi(\xi)$ is invariant under translations, hence for every $u\in\R$,
\eq{\law\pr{\ens{u+A}}= \law\pr{\ens{u+\ens{A}}} = \law\pr{\ens{A}}.}
The following lemma shows that
\eq{U = \ens{A} \sim \mathrm{Uniform}_{[0,1[}.} 

\begin{lemma} Let $Z$ be a real random variable. Then
	\eq{\label{eq:uniform}\law(\ens{Z}) = \law(\ens{Z+u}) \forall u\in\R \iff\law(\ens{Z}) = \mathrm{Uniform}_{[0,1[}.}\label{lemma:uniform}
\end{lemma}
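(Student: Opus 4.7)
\textbf{Proof plan for Lemma \ref{lemma:uniform}.} The reverse implication is immediate: if $\{Z\}$ is uniform on $[0,1[$, viewing it as a random variable on the torus $\mathbb{T}=\R/\Z$, its law is Haar measure, which is invariant under every rotation $x\mapsto\{x+u\}$. Thus $\{Z+u\}$ is uniform on $[0,1[$ for every $u\in\R$.

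For the forward direction, the plan is to pass to the torus and use Fourier analysis. Let $\mu$ denote the law of $\{Z\}$ on $[0,1[$, viewed as a Borel probability measure on $\mathbb{T}=\R/\Z$. The hypothesis says exactly that $\mu$ is invariant under every rotation of $\mathbb{T}$. For $n\in\Z$, compute the Fourier coefficient
\eq{c_n = \E\crch{e^{2\pi i n Z}} = \E\crch{e^{2\pi i n \{Z\}}},}
where the second equality holds because $n\lfloor Z\rfloor\in\Z$. Applying the assumption with any $u\in\R$,
\eq{c_n = \E\crch{e^{2\pi i n \{Z+u\}}} = \E\crch{e^{2\pi i n (Z+u)}} = e^{2\pi i n u} c_n.}
For every $n\neq 0$, selecting $u$ with $nu\notin\Z$ yields $c_n=0$, while $c_0=1$. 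These are precisely the Fourier coefficients of the uniform distribution on $[0,1[$.

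To conclude, invoke the classical uniqueness of Fourier series: two Borel probability measures on $\mathbb{T}$ with the same Fourier coefficients coincide (the trigonometric polynomials are dense in $C(\mathbb{T})$ by Stone--Weierstrass, hence separate measures). Therefore $\mu$ is the uniform law on $[0,1[$. There is no real obstacle in this argument; the only mild subtlety is distinguishing $\{Z+u\}$ (fractional part of a real sum) from the rotated random variable on the torus, which is why we phrase everything through integer-frequency exponentials where this distinction disappears.
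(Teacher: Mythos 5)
Your proof is correct and follows essentially the same route as the paper: both directions are handled via Fourier coefficients on the torus, with the forward implication obtained by showing $c_n=e^{2\pi i n u}c_n$ forces $c_n=0$ for $n\neq 0$ and then invoking uniqueness of Fourier coefficients. The only cosmetic difference is that you choose a suitable $u$ for each $n$, whereas the paper fixes a single irrational $u$ that works for all $n\neq 0$ at once.
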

\begin{proof}
	The law of $Z$ (mod 1) is determined by its Fourier coefficients $\E\crch{e^{2i\pi k Z}}$ for $k\in\Z$. If there exists an irrational $u\in[0,1[$ such that $\law(\ens{Z}) = \law(\ens{Z+u})$, then
	\aln{\E\crch{e^{2i\pi k(Z+u)}} =\E\crch{e^{2i\pi k\ens{Z+u}}} = \E\crch{e^{2i\pi k \ens{Z}}} =  \E\crch{e^{2i\pi k Z}}.}
	Since $e^{2i\pi k u} \ne 1$ for any irrational $u$ and any $k\ne 0$, this forces $\E\crch{e^{2i\pi k Z}} = 0$ for all $k\ne 0$. With $\E\crch{e^{2 i\pi 0 Z}} = 1$, we conclude that $\ens{Z}$ is uniform on $[0,1)$. Conversely, the uniform law satisfies \eqref{eq:uniform} by $1$-periodicity.
\end{proof}

Conditioning on $U$ yields the phase decomposition of $\sfP$:
\eq{\sfP = \int_0^1\dif u ~\sfP(\cdot ~| ~U=u),}
where $\sfP(\cdot ~| ~U=u)$ denotes the conditional law given $U$, realized via a probability kernel $K(\dif\xi,u)$. The phases $\ens{\sfP(\cdot ~| ~U=u)}_u$ are mutually singular because the fractional part $\ens{\Phi(\xi)}$ identifies the phase, as seen in equation \eqref{eq:recoverShift}. The same argument shows that each phase $\sfP(\cdot ~| ~U=u)$ is not $\R$-stationary but is $\Z$-stationary (shifting by an integer does not change $U=\ens{A}$). Finally we claim
\eq{\sfP(\cdot ~| ~U=u) = \sfP_0\circ\theta_u^{-1},\quad \text{with } \sfP_0 = \sfP(\cdot ~| ~U=0).}
By definition, the kernel $K$ satisfies
\eq{\int f(\xi)g(\ens{\Phi(\xi)})\dif\sfP(\xi) = \int_{[0,1[}g(u)\dif u \int f(\xi)K(\dif\xi,u),}
for any bounded measurable $f$ and $g$. Extend $g$ and $K$ by $1$-periodicity when needed. Equivalently, $K$ is caracterized by
\eq{\int f(\xi)g(\Phi(\xi))\dif\sfP(\xi) = \int_{[0,1[}g(u)\dif u \int f(\xi)K(\dif\xi,u),\label{eq:kernel}}
for any bounded measurable $f$ and any $1$-periodic bounded measurable $g$. Now,
\aln{\int f(\xi)g(\Phi(\xi))\dif\sfP(\xi) & = \int f(\theta_v\xi)g(\Phi(\theta_v\xi))\dif\sfP(\xi)  \nonumber \\
	&= \int f(\theta_v\xi)g(v+\Phi(\xi)) \dif\sfP(\xi)  \nonumber \\  
	& = \int_{[0,1[} g(v+u)\dif u \int f(\theta_v\xi)K(\dif\xi,u) \nonumber \\
	& = \int_{[v,1+v[} g(u)\dif u \int f(\theta_v\xi)K(\dif\xi,u-v).}
The first equality comes from the $\R$-stationarity of $\sfP$; the second equality uses \eqref{eq:phiShift} and the $1$-periodicity of $g$; the third is the kernel definition \eqref{eq:kernel}; the fourth is the change of variables $u\leftarrow u+v$. Lastly, denoting $K(\theta_{v}\dif\xi,u)$ as the pushforward of $K(\dif\xi,u)$ by the map $\theta_{v}$ for ease of notation, we obtain

\al{\label{eq:kernel2}	\int f(\xi)g(\Phi(\xi))\dif\sfP(\xi) & = \int_{[v,1+v[} g(u)\dif u \int f(\xi)K(\theta_{v}\dif\xi,u-v)\nonumber \\ 
& = \int_{[0,1[} g(u)\dif u \int f(\xi)K(\theta_{v}\dif\xi,u-v).} The last equality is implied by the $1$-periodicity of $g$ and $K$ with respect to the second variable. Putting equations \eqref{eq:kernel} and \eqref{eq:kernel2} altogether implies that \eq{K(\theta_{v}\dif\xi,u-v)= K(\dif\xi,u),\quad \forall u,v\in\R,}
This completes the proof.

\section{Proofs for optimal transport}\label{sec:OT}

\subsection{Proof of Lemma \ref{lem:Wass}}

Let $\square_n$ be the square, centered on $\square$, consisting of $2n+1$ disjoint boxes which are translations of $\square$, such that $\abs{\square_n} = (2n+1) \abs{\square}.$ On each translated box $\theta_k\square$, let $\xi_k$ be a translated realization of $\sfP_\square$, with $\ens{\xi_k}_k$ independent. Let $q_k$ be an optimal coupling between $\Leb_{\theta_k\square}$ and $\xi_k$ (that optimal coupling exists, in finite volume). Let $q^{til} = \sum_{k\in m\Z}q_k$ be the tiled version, and if $U$ is uniform in $[0,1]$ and independent from the $\{\xi_k\}$, let $q^{stat}= (\theta_U,\theta_U)q^{til}$, which is an obvious random matching between a realization of $\Leb$ and $\sfP_\square^{stat}$. Using Corollary 2.7 from \cite{erbar2024optimaltransportstationarypoint}:
\aln{\Wass_p^p(\sfP_\square^{stat}, \Leb_{\R^d}) & \leq \liminf_{n\to+\infty}\frac{1}{\abs{\sqn}}~\E~\int_{\sqn\times\R^d}\abs{x-y}^p \dif q^{stat}(x,y) \\ & \leq\liminf_{n\to+\infty}\frac{1}{\abs{\sqn}}~\E~\int_{\theta_U \sqn\times\R^d} \abs{x-y}^p \dif q^{til}(x,y) 
	\\ &\leq \liminf_{n\to+\infty}\frac{1}{\abs{\sqn}}~\int_{u\in\square}\frac{\dif u}{\abs{\square}}\E~\int_{ \square_{n+1}\times\R^d} \abs{x-y}^p \dif q^{til}(x,y) \\ &\leq \liminf_{n\to+\infty}\frac{1}{\abs{\sqn}}~(2n+3)^d~\E~\int_{ \square\times\square} \abs{x-y}^p \dif q_0(x,y) \\ & \leq \liminf_{n\to+\infty}\frac{1}{\abs{\square}}~\frac{(2n+3)^d}{(2n+1)^d}~\E_{\sfP_\square}\crch{\sfW_p^p(\xi,\Leb_\square)} \\ &\leq \frac{\E_{\sfP_\square}\crch{\sfW_p^p(\xi,\Leb_\square)}}{\abs{\square}}.} In the third inequality, we used that $\theta_U\square_n$ is included in $\square_{n+1}$ and removed $U$ from the expectation. This concludes.

\subsection{Proof of Lemma \ref{lem:wo}}

The Kantorovich–Wasserstein is defined for general cost function $h$, under suitable assumptions on $h$, by \eq{\sfC_h(\sfP,\sfR) = \inf_{(\xi,\eta)} \E\crch{\inf_{q}\limsup_n \frac{1}{\abs{\square_n}}\int_{\square_n\times \R} h(x-y)\dif q (x,y)}, \label{eq:OT2}} for two stationary point processes $\sfP,\sfR$. Let $h$ be any cost such that $|x-y|-1\leq h(x-y)\leq |x-y|$, radial, radially increasing, strictly convex and such that $h(0) =0$. Then $\Wass_1(\sfP,\sfP^\Z)<\infty$ implies trivially $\sfC_h(\sfP,\sfP^\Z)<+\infty.$ Let $q = (\id,T)$ be an equivariant matching, optimal for $\sfC_h$. This matching also has $L^1$ perturbations. Let us prove that $q$ is well-ordered, using the strict convexity of $h$. Notice that we do not need $q$ to be optimal for $\Wass_1$ and we simply need a well-ordered matching. 

We prove here that the matching $q$ has to be cyclic monotone, in the following sense: \eq{\label{eq:cmonotone}\P-\text{a.s.}, \forall x,y\in \xi,~h(x-Tx)+h(y-Ty) \leq h(x-Ty)+h(y-Tx),} where we denoted $T$ the matching from $\xi$ to $\e.$ If $q$ does not satisfy this, then we build another equivariant matching $\tilde{q}$ which has strictly lower cost. There exists $\epsilon>0$ and a large box $\square$ such that \eqn{\P\pr{\mathcal{A}_\square}=\P\pr{\begin{aligned}
			\exists x,y\in\xi\cap\square,~Tx,Ty\in\e\cap \square \\
			h(x-Tx)+h(y-Ty)>\epsilon+h(x-Ty)+h(y-Tx)
	\end{aligned}} \geq \epsilon.} 
For the sake of simplicity, let us assume (at no cost) that $\square = \square_1$. Then we can define our new matching. Tile the whole space using $\theta_u\square_1$, meaning writing $\R^d = \bigcup_{k\in\Z^d}\theta_{k+u}\square_1$, where $u\in\square_1$ is fixed. On each  tile $\theta_{k+u}\square_1$, either the event $\mathcal{A}_{\theta_{k+u}\square_1}$ does not happen, and we do not change the matching, either it does happen, in which case we choose some $(x,y)$ in square $\square$ satisfying the event using a deterministic rule which does not depend on the tile (for instance, centering the box, and using the lexicographical order), and we exchange $x$ and $y$, meaning $x$ is now matched to $Ty$, and $y$ to $Tx$. This provides a new infinite volume matching $q_{til,u}$, which is still a coupling of $\xi$ and $\e$. Now notice that since $q^{\theta_k\omega}(A\times B) = q^\omega((A-k)\times(B-k))$ for all $k\in\Z^d$, and since our tiling is compatible with $\Z^d$ translates, this implies that $q_{til,u}$ is $\Z^d$-equivariant: \eqn{q_{til,u}^{\theta_k\omega}(A\times B) = q_{til,u}^\omega((A-k)\times(B-k)),\quad \forall k\in\Z^d,~\forall A,B.} Now, defining lastly \eqn{\tilde{q}^\omega = \int_{u\in\square_1} \dif u~q^{\omega}_{til,u}.} Then $\tilde{q}^\omega$ is still a coupling of $(\xi^\omega,\e^\omega)$, it is not an atomic measure or a matching since points are split but that does not matter, simply because optimal transport in \cite{erbar2024optimaltransportstationarypoint} is defined for random measures and couplings. By construction it is now $\R^d$-equivariant. Thus, formula \eqref{eq:rewrite} gives that \eqn{\E\int_{\square_1\times\R^d} h(x-y)\dif \tilde{q}(x,y)\leq \E\int_{\square_1\times\R^d}h(x-y)\dif q(x,y) -\epsilon^2.}

This inequality contradicts with the optimality of $q$ for cost $\sfC_h$, thus $q$ is cyclic-monotone in the sense of \eqref{eq:cmonotone}.

In dimension $d=1$, this implies the matching is well-ordered, exactly just as in classical optimal transport. Indeed, if $x<y$, let us assume by contradiction $Tx>Ty.$ Then, let $a = x-Tx,~b=y-Ty$ and $\delta = Tx-Ty>0$. Notice that $a<b$, $x-Ty= a+\delta = (1-t)a+tb$ and $y-Tx = b-\delta = ta+(1-t)b$ with $t= \delta/(b-a).$ Then strict convexity immediately contradicts the inequality $h(a)+h(b) \leq h(a+\delta)+h(b-\delta).$

Lastly, when the second law $\sfR$ is $\sfP^\Z$, the matching gives perturbations $\{p_j\}_j$ such that $\xi= \sum_j\delta_{j+p_j}$ has law $\sfP$. Its $\Z$-stationarity is explained in \cite{dereudre2024nonhyperuniformityperturbedlattices}, Section 1.4.

\section{Riesz gases}\label{sec:Riesz}

\subsection{Finite volume definition} \label{ssec:finiteVolume} 
Let us define the Riesz gas for $-2<s<0$. The Riesz potential for $s<0$ is defined by 
\eq{g_s(x-y)=-|x-y|^{-s}.} 
Its Fourier transform (convention $\xi\mapsto \int_\R g(x) e^{-i x\xi}\dif x$) exists as a tempered distribution, and is also radial and homogeneous, 
\eq{\widehat{g_s}(\xi) = \mathsf{finite~ part}~\pr{\frac{C_s}{\abs{\xi}^{1-s}}},} 
where the constant $C_s = \frac{\G(\frac{1-s}{2})}{-\G(\frac{s}{2})}\sqrt{\pi}~2^{1-s}$ is strictly positive. It is therefore called a \textit{positive-type} potential (recall that $\G(\frac{s}{2})$ changes sign at every non-positive even integer). This explains the constraint $s>-2$. See \cite{helgason1999radon} for details, in particular for the definition of the Hadamard finite part. 

In finite volume $\square_n = [-\frac{n}{2},+\frac{n}{2}]$ with $n$ particles, the Hamiltonian of the Riesz gas is defined as 
\eq{H_n^{(s)}(\g) = \frac{1}{2}\int\int_{\square_n\times\square_n} g_s(x-y) ~\nu(\dif x)~\nu(\dif y),\quad \text{with }\nu = \g-\Leb_{\square_n}.} 
This energy consists of $n$ particles of positive unit charge interacting via the Riesz potential, together with a uniformly distributed background of negative unit charge in $\square_n$. The background exactly cancels global charges, meaning $\nu(\R) = \nu(\square_n) = 0$. Expanding gives 
\eq{H_n^{(s)}(\g) = \sum_{x,y\in\g} g_s(x-y)-\sum_{x\in\g}\int_{y\in\square_n}g_s(x-y)\dif y +\frac{1}{2}\iint_{\square_n\times\square_n} g_s(x-y)\dif x\dif y,\label{eq:hamiltonian2}} 
corresponding respectively to particle-particle, particle-background, and background-background interactions (the latter being a constant, included for simplicity). 

As stated in p. 21 of \cite{LewinSurvey2022}, the Fourier representation of the Hamiltonian is given by 
\eq{H_n^{(s)}(\g) = \frac{C_s}{4\pi} \int_\R \frac{\abs{\widehat{\nu}(\xi)}^2}{\abs{\xi}^{1-s}}\dif\xi,\quad \text{with }\nu = \g-\Leb_{\square_n}.\label{eq:Fourier}} 
It is well defined because $\widehat{\nu}(\xi) = \nu(\R)+O(\xi) = O(\xi)$ near zero, which implies $\frac{\abs{\widehat{\nu}(\xi)}^2}{\abs{\xi}^{1-s}} = O\pr{\abs{\xi}^{1+s}}$ with $1+s>-1$. Moreover, $\abs{\widehat{\nu}(\xi)} \leq \abs{\nu}(\R) = 2n$, hence $\frac{\abs{\widehat{\nu}(\xi)}^2}{\abs{\xi}^{1-s}} = O\pr{\frac{1}{|\xi|^{1-s}}}$ as $\abs{\xi}\to\infty$, which is integrable since $s<0$. A non-trivial consequence is that the energy is positive, even though the pairwise potential is negative. 

Everything explained here also applies to any potential $g$, with the corresponding $\widehat{g}$ under suitable assumptions on $g$. For instance, it holds for $g(x-y) = e^{-\abs{x-y}}$ or $g(x-y) = e^{-(x-y)^2/2}$, both of which we will use. For instance, as soon as $g$ is continuous in $0$ and integrable at infinity, we also have \eq{\label{eq:energie_gen}H_{n,g}(\g) =\frac{1}{2}\iint_{\square_n\times\square_n} g(x-y)\nu(\dif x)\nu(\dif y) = \frac{1}{4\pi} \int_\R \widehat{g}(\xi)\abs{\widehat{\nu}(\xi)}^2\dif\xi.}

The Gibbs measure defining the Riesz gas in finite volume is 
\eq{\sfP_{n}^{s,\beta}(\dif\g) = \frac{1}{Z_n^{(s,\beta)}}~e^{-\beta H_n^{(s)}(\g)}~\Binom_{n,\square_n}(\dif\g),} where the partition function is \eq{Z_n^{(s,\beta)} = \int_\g e^{-\beta H_n^{(s)}(\g)}\Binom_{n,\sqn}(\dif\g).} The measure is well defined because of the following lemma.

\begin{lemma}\label{lem:partfunction}
	There exists a constant $C= C_{s,\beta}>0$ such that 
	\eq{e^{-Cn}\leq Z_n^{(s,\beta)} \leq 1.\label{eq:partitionFunctionBornes}}
\end{lemma}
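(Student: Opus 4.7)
The upper bound is immediate: the Fourier representation \eqref{eq:Fourier} shows that $H_n^{(s)}(\g)\geq 0$ pointwise (the integrand is non-negative since $\widehat{g_s}\geq 0$), hence $e^{-\beta H_n^{(s)}(\g)}\leq 1$ for every configuration. Integrating against the probability measure $\Binom_{n,\sqn}$ yields $Z_n^{(s,\beta)}\leq 1$.

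For the lower bound, I would restrict the integration defining $Z_n^{(s,\beta)}$ to a ``quasi-lattice'' subset of configurations. Tile $\sqn$ by the $n$ unit intervals $\ens{I_k}_{k=1}^n$ with centres $c_k = k-(n+1)/2$, and set
\eqn{A_n:=\ens{\g\in\G~:~\g(I_k)=1,~k=1,\ldots,n}.}
Since the $n$ particles of $\Binom_{n,\sqn}$ are i.i.d.\ uniform on $\sqn$, summing over the $n!$ possible allocations of particles to intervals gives $\Binom_{n,\sqn}(A_n) = n!/n^n \geq c\, e^{-n}$ for a universal $c>0$, by Stirling. Conditionally on $A_n$, the $n$ particles are independent, with the one in $I_k$ uniform on $I_k$.

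The heart of the argument is to show that $\E_{A_n}\crch{H_n^{(s)}}\leq \kappa_s\, n$ for a constant $\kappa_s$ independent of $n$. I plan to use the Fourier representation \eqref{eq:Fourier} together with a direct computation of the conditional second moment of $\widehat\nu$, exploiting the independence of the particles and the key identity
\eqn{\E_{A_n}\crch{\widehat\g(\xi)} = \psi(\xi)\sum_{k=1}^n e^{-ic_k\xi} = \widehat{\Leb_{\sqn}}(\xi),}
where $\psi(\xi) = 2\sin(\xi/2)/\xi$ is the Fourier transform of the unit-interval indicator centred at $0$. This identity makes the cross-term against the background exactly cancel the square $|\widehat{\Leb_{\sqn}}|^2$, leaving only the intra-interval variances and yielding the clean formula
\eqn{\E_{A_n}\crch{\abs{\widehat\nu(\xi)}^2} = n\pr{1-\psi^2(\xi)}.}
Substituting in \eqref{eq:Fourier} and using Fubini gives
\eqn{\E_{A_n}\crch{H_n^{(s)}} = \frac{nC_s}{4\pi}\int_\R\frac{1-\psi^2(\xi)}{\abs{\xi}^{1-s}}\dif\xi=:\kappa_s\, n,}
and the integral is finite because $1-\psi^2(\xi)\sim\xi^2/12$ at the origin (so the integrand is $O(|\xi|^{1+s})$, integrable thanks to $s>-2$) and $\psi^2\to 0$ at infinity (so the integrand is $O(|\xi|^{s-1})$, integrable thanks to $s<0$).

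To conclude, I apply Jensen's inequality to the convex function $x\mapsto e^{-\beta x}$ and the conditional law $\Binom_{n,\sqn}(\cdot\mid A_n)$:
\eqn{Z_n^{(s,\beta)}\geq\int_{A_n}e^{-\beta H_n^{(s)}}\dif\Binom_{n,\sqn}\geq \Binom_{n,\sqn}(A_n)\exp\pr{-\beta\,\E_{A_n}\crch{H_n^{(s)}}}\geq c\,e^{-(1+\beta\kappa_s)n},}
which gives the lower bound with $C_{s,\beta}=1+\beta\kappa_s$ (the universal constant $c$ being absorbed for $n$ large, and handled separately for small $n$). The main obstacle is the linear scaling of the expected energy: a naive pointwise bound on $|\widehat\nu|^2$ through Cauchy--Schwarz would only produce $O(n^2)$, which is insufficient. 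The crucial $n^2\to n$ reduction comes precisely from the cancellation of the mean $\E_{A_n}[\widehat\g]$ against $\widehat{\Leb_{\sqn}}$, an identity enforced by the quasi-lattice structure encoded in $A_n$ and the exactness of the neutralizing background.
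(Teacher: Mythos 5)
Your proof is correct and follows essentially the same route as the paper's: both restrict the binomial integral to configurations with exactly one particle per unit cell (the cell method), both apply Jensen's inequality to pull the expectation inside the exponential, and both exploit the cancellation of the mean particle distribution against the uniform background to reduce the expected energy from the naive $O(n^2)$ to the needed $O(n)$. The only difference is cosmetic: the paper computes $\E_{A_n}[H_n^{(s)}]$ directly in physical space by integrating \eqref{eq:hamiltonian2} and watching the particle--particle, particle--background and background--background terms collapse to the diagonal intra-cell contribution $-\frac{n}{2}\iint_{[0,1]^2}g_s$, whereas you carry out the identical cancellation in Fourier space, obtaining $\E_{A_n}[|\widehat\nu(\xi)|^2]=n(1-\psi^2(\xi))$ and then integrating against $C_s|\xi|^{s-1}$.
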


\begin{proof}
	The lower bound follows directly from $H_n^{(s)}\geq 0$. For the upper bound, let $\square_n=\square_n^{(1)}\sqcup\ldots\sqcup\square_n^{(n)}$ where $\abs{\square_n^{(j)}}=1$, obtained by dividing $[-\frac{n}{2},+\frac{n}{2}]$ into $n$ unit intervals. Using the so-called "cell method", we have  
	\aln{Z_n^{(s,\beta)} &= \int e^{-\beta H_n^{(s)}}\Binom_{n,\square_n}(\dif\g) \\ 
		&= \frac{n!}{n^n} \int_{x_1<\ldots<x_n \in \square_n} e^{-\beta H_n^{(s)}}\dif x_1\ldots \dif x_n \\ 
		&\geq e^{-n}~\int_{x_1\in\square_n^{(1)},\ldots,x_n\in\square_n^{(n)}} e^{-\beta H_n^{(s)}} \dif x_1\ldots\dif x_n \\ 
		& \geq e^{-n} \exp\pr{-\beta \int_{x_1\in\square_n^{(1)},\ldots,x_n\in\square_n^{(n)}} H_n^{(s)} \dif x_1\ldots \dif x_n},} 
	where Jensen’s inequality is used in the last step. By integrating equation \eqref{eq:hamiltonian2}, we obtain 
	\aln{\int_{x_1\in\square_n^{(1)},\ldots,x_n\in\square_n^{(n)}} H_n^{(s)} \dif x_1\ldots \dif x_n 
		&= \sum_{j<k}\iint_{\square_n^{(j)}\times\square_n^{(k)}} g_s(x-y)\dif x\dif y-\sum_{j,k}\iint_{\square_n^{(j)}\times\square_n^{(k)}} g_s(x-y)\dif x\dif y \\ 
		&\qquad+ \frac{1}{2}\sum_{j,k}\iint_{\square_n^{(j)}\times\square_n^{(k)}} g_s(x-y)\dif x\dif y \\ 
		&= -\frac{1}{2}\sum_{j}\iint_{\square_n^{(j)}\times\square_n^{(j)}} g_s(x-y)\dif x\dif y \\ 
		&=  -\frac{n}{2}\iint_{[0,1]\times[0,1]} g_s(x-y)\dif x\dif y = -a_s n.} 
	Thus the inequality holds with $C = C_{s,\beta} = 1+\beta a_s.$
\end{proof}

\subsection{Infinite volume existence}

To define the Riesz gas in infinite volume, we seek accumulation points of the sequence of stationarized measures $\ens{\sfP_n^{s,\beta,\mathrm{stat}}}_n$ (as defined in Subsection \ref{ssec:tiling}). Using entropy methods, it suffices to prove 
\eq{\Ent{\sfP_n^{s,\beta}}{\Poi_{\square_n}}\leq C n,\label{eq:borneEntropie}} 
which follows easily:
\aln{ 
	\Ent{\sfP_n^{s,\beta}}{\Poi_{\square_n}} 
	&= \int \log\pr{\frac{\dif \sfP_n^{s,\beta}}{\dif \Binom_{n,\sqn}}~\frac{\dif \Binom_{n,\sqn}}{\dif \Poi_\sqn}} \dif \Poi_\sqn \\ 
	&= \int \log\pr{\frac{e^{-\beta H_n^{(s)}}}{Z_n^{(s,\beta)}}~\frac{n!}{e^{-n}n^n}} \dif \Poi_\sqn \\ 
	&= -\beta~\E_{\Poi_\sqn}\crch{H_n^{(s)}}-\log Z_n^{(s,\beta)} +\log\pr{e^n~\frac{n!}{n^n}} \\ 
	& \leq (C+1)n,} 
using respectively $H_n^{(s)} \geq 0$ (see \eqref{eq:Fourier}), $Z_n^{(s,\beta)}\geq e^{-Cn}$ (Lemma \ref{lem:partfunction}), and $\frac{n!}{n^n} \leq 1.$ 

Equation \eqref{eq:borneEntropie} and Lemma \ref{lem:ents} show that the specific entropy of the stationarized version of $\sfP_n^{s,\beta}$ satisfies 
\eq{\Ents{\sfP_n^{s,\beta,\mathrm{stat}}}{\Poi_\R} \leq \frac{\Ent{\sfP_n^{s,\beta}}{\Poi_{\square_n}}}{\abs{\square_n}} \leq C.} 
Recalling Proposition \ref{prop:ents}, the compactness of level sets for the specific entropy (with respect to the local topology) implies the existence of accumulation points for the sequence $\ens{\sfP_n^{s,\beta,\mathrm{stat}}}_n$. This concludes the proof of Theorem \ref{thm:existence}.

\subsection{The Coulombian case for $s=-1$}

For the Coulombian case $s=d-2=-1$, the model is very well known and explicitly computable, see \cite{Baxter_1963, Kunz74, AizenmanMartin, aizenman2000boundedfluctuationstranslationsymmetry,Chafa__2022}. See also \cite{lelotte2023phasetransitionsonedimensionalriesz,LewinSurvey2022} for a historical overview. It is one of the most famous, studied, and arguably beautiful models of statistical physics. It is known to crystallize at any temperature, is Number-Rigid (first observed in \cite{AizenmanMartin}), satisfies the DLR equations, etc.  

In this case, the energy has a special form, since the potential is $-|x-y|$, which is easily summable once particles are ordered. More precisely, if $x_{(1)}<\ldots < x_{(n)}$ are the ordered $n$ particles of $\g=\ens{x_1,\ldots,x_n}$, then the so-called Baxter equality 
\eq{-\sum_{1\leq i<j\leq n}|x_i-x_j| = -\sum_{1\leq i<j\leq n}(x_{(j)}-x_{(i)}) = \sum_{j=1}^n(2j-(n+1))x_{(j)},} 
implies that
\eq{H_{n}^{(s=-1)}(\g) = \sum_{j=1}^n(x_{(j)}-y_j)^2+\frac{n}{12},\label{eq:energieCoulomb}} 
where $y_j = j-\frac{n+1}{2}$ is the finite lattice of $n$ particles in $\square_n = [-\frac{n}{2},\frac{n}{2}].$ This energy being closely related to $\sfW_2^2(\g,\Leb_\sqn)$ implies (at least heuristically) that in infinite volume, the 1D Coulomb gas is an $L^2$-perturbed lattice.

\begin{rmk}
	Equation \eqref{eq:energieCoulomb} shows that $\sfP_n^{s=-1,\beta}$ is the law of $n$ independent points $x_{(j)}$ with Gaussian distribution $\mathcal{N}(y_j,1/2\beta)$, conditioned to be ordered. This point of view is used, for instance, in \cite{Chafa__2022}. It also provides a way to apply lattice Gibbs measure methods to establish the existence of the infinite-volume one-dimensional Coulomb gas, together with the Dobrushin–Lanford–Ruelle equations. 
\end{rmk}

\subsection{$L^2$ transport in finite volume}

We show that for $-2<s\leq -1$, Riesz gases define $L^2$-perturbed lattices. We first establish this in finite volume.

\begin{lemma}\label{lem:energievolumique}
	There exists $C=C_{s,\beta}$ such that 
	\[
	\E_{\sfP_n^{s,\beta}}\crch{\sfW_2^2(\g,\Leb_\sqn)} \leq Cn.
	\]
\end{lemma}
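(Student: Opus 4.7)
The plan is to reduce the $L^2$-transport cost to the Coulombian energy $H_n^{(-1)}$ via the Baxter identity \eqref{eq:energieCoulomb}, bound $\E_{\sfP_n^{s,\beta}}[H_n^{(s)}]$ using Lemma \ref{lem:partfunction}, and compare $H_n^{(-1)}$ with $H_n^{(s)}$ through the Fourier representation \eqref{eq:Fourier}. From the Gibbs variational principle, the non-negativity of $\Ent{\sfP_n^{s,\beta}}{\Binom_{n,\sqn}} = -\log Z_n^{(s,\beta)} - \beta\E_{\sfP_n^{s,\beta}}[H_n^{(s)}]$ combined with Lemma \ref{lem:partfunction} immediately yields $\E_{\sfP_n^{s,\beta}}[H_n^{(s)}] \leq Cn$. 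Moreover, the one-dimensional monotone rearrangement together with the expansion \eqref{eq:energieCoulomb} gives the pointwise identity $\sfW_2^2(\gamma, \Leb_\sqn) = H_n^{(-1)}(\gamma) = \int_\R D(x)^2\,dx$, where $D(x) = \gamma([-\tfrac{n}{2},x]) - (x + \tfrac{n}{2})$ is the discrepancy extended by zero outside $\sqn$; by Plancherel and $\widehat{\nu}(\xi) = i\xi\widehat{D}(\xi)$ this also equals a constant times $\int|\widehat{\nu}|^2/\xi^2\,d\xi$. For $s=-1$ these two ingredients conclude immediately.

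For $s\in(-2,-1)$, the weights $1/\xi^2$ and $1/|\xi|^{1-s}$ have opposite dominance regimes (the former at large $|\xi|$, the latter at small $|\xi|$), so no simple pointwise comparison of $H_n^{(-1)}$ and $H_n^{(s)}$ suffices. I would fix a Schwartz bump $\chi$ with $\int\chi=1$ and decompose $D = D*\chi + (D-D*\chi)$, so that $\int D^2 \leq 2\int(D*\chi)^2 + 2\int(D-D*\chi)^2$. For the smoothed piece, the elementary inequality $|\widehat{\chi}(\xi)|^2 \leq C_{s,\chi}\,|\xi|^{1+s}$ (valid on $|\xi|\leq 1$ because $1+s\leq 0$ forces $|\xi|^{1+s}\geq 1$, and on $|\xi|>1$ because the Schwartz decay of $\widehat{\chi}$ dominates any negative power of $|\xi|$) combined with Plancherel yields $\int(D*\chi)^2 dx \leq C_{s,\chi}\,H_n^{(s)}(\gamma)$ pointwise, hence $\E[\int(D*\chi)^2] \leq Cn$. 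For the remainder, using $D'=\nu$ distributionally and $|\nu(I)|\leq \gamma(I)+|I|$, one has $|(D-D*\chi)(x)| \leq C_\chi + N_x(\gamma)$ with $N_x$ the count in a fixed-radius neighborhood of $x$, whence
\[
\int (D-D*\chi)^2\,dx \;\leq\; Cn + C\sum_{j<k}(1-|x_j-x_k|)_+.
\]

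The main obstacle is to bound $\E_{\sfP_n^{s,\beta}}[\sum_{j<k}(1-|x_j-x_k|)_+] \leq Cn$. Since $g_s(0)=0$ for $s\in(-2,0)$, the Riesz potential does not prevent close pairs, so this cannot come from the energy alone. I would close the argument by a Gibbs-variational comparison with the Binomial: using $\Ent{\sfP_n^{s,\beta}}{\Binom_{n,\sqn}} \leq Cn$ and a Bernstein-type estimate $\log \E_{\Binom_{n,\sqn}}[\exp(\lambda \sum_{j<k}(1-|x_j-x_k|)_+)] \leq Cn$ for some fixed $\lambda>0$ (which holds because this is a bounded $U$-statistic whose mean and variance are both linear in $n$ under $\Binom_{n,\sqn}$), the standard duality $\E_\mu[F] \leq \lambda^{-1}\Ent{\mu}{\nu} + \lambda^{-1}\log\E_\nu[e^{\lambda F}]$ gives the required $O(n)$ control. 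Putting the three pieces together yields $\E_{\sfP_n^{s,\beta}}[\sfW_2^2(\gamma,\Leb_\sqn)] \leq Cn$.
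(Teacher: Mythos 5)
Your reduction to $H_n^{(-1)}$ via Baxter, the Fourier comparison $1/\xi^2 \leq 1/|\xi|^{1-s}$ for small $|\xi|$, and the entropy-nonnegativity bound $\beta\,\E_{\sfP_n^{s,\beta}}[H_n^{(s)}] \leq -\log Z_n^{(s,\beta)} \leq Cn$ are all sound (the last step is even slightly cleaner than the paper's convexity argument for $\partial_\beta\log Z_n$). Your mollification $D = D*\chi + (D - D*\chi)$ is a real-space version of the paper's split of the frequency integral into $|\xi|\leq 1$ and $|\xi|>1$; the bound $\int(D*\chi)^2 \lesssim H_n^{(s)}$ and the pointwise bound $\int(D - D*\chi)^2 \lesssim n + \sum_{j<k}(1-|x_j-x_k|)_+$ are both correct. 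Up to here the two proofs are parallel.

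The last step, however, is wrong. You claim that for some fixed $\lambda>0$,
\[
\log \E_{\Binom_{n,\sqn}}\Bigl[\exp\Bigl(\lambda \sum_{j<k}(1-|x_j-x_k|)_+\Bigr)\Bigr] \leq Cn ,
\]
justifying this by the fact that the $U$-statistic has mean and variance of order $n$. But a bounded $U$-statistic of order two has a much heavier upper tail than a sum of independent variables, and the exponential moment at a fixed scale does \emph{not} scale linearly in $n$. Concretely, condition on the event that all $n$ points fall in a fixed interval of length $\tfrac12$: this has $\Binom_{n,\sqn}$-probability $(2n)^{-n}$, yet on it every pair contributes at least $\tfrac12$, so $F := \sum_{j<k}(1-|x_j-x_k|)_+ \geq \tfrac12\binom{n}{2}\geq n^2/8$ for $n\geq 2$. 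Hence
\[
\log \E_{\Binom_{n,\sqn}}[e^{\lambda F}] \;\geq\; \lambda\,\tfrac{n^2}{8} - n\log(2n) \;=\; \Theta(n^2)
\]
for any fixed $\lambda>0$. No rescaling of $\lambda$ saves the duality: sending $\lambda\to 0$ blows up the term $\lambda^{-1}\Ent{\sfP_n^{s,\beta}}{\Binom_{n,\sqn}}\sim \lambda^{-1}n$. So the entropy/Bernstein route cannot give $\E_{\sfP_n^{s,\beta}}[F]\leq Cn$.

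The correct way to close this — and what the paper actually does — is to control the $U$-statistic \emph{pointwise} by the energy, not in expectation via entropy. Note that $(1-|r|)_+ \leq e^{1/2}e^{-r^2/2}$ for all $r\in\R$, and the Gaussian is of positive type. Expanding $H_{n,\mathrm{gauss}}(\g) = \tfrac12\iint_{\sqn\times\sqn} e^{-(x-y)^2/2}\,\nu(\dif x)\nu(\dif y)$, the particle–background and background–background pieces are $O(n)$, while the diagonal of the particle–particle double sum contributes exactly $n$. Since $H_{n,\mathrm{gauss}}\geq 0$ by the Fourier representation, one gets $\sum_{j\neq k} e^{-(x_j-x_k)^2/2} \leq 2H_{n,\mathrm{gauss}}+Cn$, and then $\widehat{g}_{\mathrm{gauss}}(\xi)=\sqrt{2\pi}\,e^{-\xi^2/2}\leq C_s\,|\xi|^{s-1}$ globally (using $1-s>0$) gives $H_{n,\mathrm{gauss}}\lesssim H_n^{(s)}$. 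Chaining, $F\lesssim H_n^{(s)} + Cn$ pointwise, and then your already-established bound $\E_{\sfP_n^{s,\beta}}[H_n^{(s)}]\leq Cn$ finishes the lemma. This is exactly the chain $H_{n,\exp}\lesssim\sum_j N_j^2\lesssim H_{n,\mathrm{gauss}}\lesssim H_n^{(s)}$ in the paper.
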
 

\begin{proof}
	Let us use the notation $\lesssim$ for inequalities valid up to multiplicative strictly positive constants (depending only on $s,\beta$) and additive terms linear in $n$, the latter corresponding to volumic contributions. We will notably use Fourier formulas for the energy under consideration. First,
	\al{
		\sfW_2^2(\g,\Leb_\sqn) &\lesssim \sfW_2^2(\g,\textstyle\sum_{j=1}^{n}\delta_{y_j}) \quad (\text{where } y_j = j-\tfrac{n+1}{2}) \nonumber \\ 
		&\lesssim H_n^{(-1)}(\g) \quad (\text{cf. } \eqref{eq:energieCoulomb})\nonumber \\ 
		&\lesssim \int_\R \frac{\abs{\widehat{\nu}(\xi)}^2}{\abs{\xi}^{2}}\dif\xi \quad (\text{with } \nu = \g-\Leb_\sqn, \text{see } \eqref{eq:Fourier})  \nonumber \\
		& \lesssim \int_{\abs{\xi}\leq 1} \frac{\abs{\widehat{\nu}(\xi)}^2}{\abs{\xi}^{2}}\dif\xi+\int_{\abs{\xi}>1} \frac{\abs{\widehat{\nu}(\xi)}^2}{\abs{\xi}^{2}}\dif\xi  \nonumber \\ 
		&\lesssim \int_{\abs{\xi}\leq 1} \frac{\abs{\widehat{\nu}(\xi)}^2}{\abs{\xi}^{1-s}}\dif\xi+\int_{\abs{\xi}>1} \frac{2\abs{\widehat{\nu}(\xi)}^2}{1+\abs{\xi}^{2}}\dif\xi \quad \text{(since }-2<s\leq -1)  \nonumber \\
		&\lesssim H_n^{(s)}(\g)+H_{n,\exp}(\g) \label{eq:ineg1},
	}
	where $H_{n,g} = \frac{1}{2}\iint_{\sqn\times\sqn}g(x-y)\nu(\dif x)\nu(\dif y)$, recalling \eqref{eq:energie_gen} and the discussion at the end of Subsection \ref{ssec:finiteVolume}).  Let us now mention that if $g$ is integrable, then the second term in the energy, which is the particle-background interaction, is volumic: $\abs{\sum_{x\in\g}\int_{y\in\sqn}g(x-y)\dif y} \leq n\int_{y\in\R}g(y)\dif y.$ Recall that the background-background interaction is constant.
	
	For the second term in \eqref{eq:ineg1}, let $\square_n=\square_n^{(1)}\sqcup\ldots\sqcup\square_n^{(n)}$ with $\abs{\square_n^{(j)}}=1$, dividing $[-\frac{n}{2},+\frac{n}{2}]$ equally into $n$ intervals of unit length. Then
	\al{
		H_{n,\exp}(\g) & \lesssim \sum_{x\neq y\in\g} e^{-|x-y|}  \nonumber \\
		& \lesssim \sum_{j,k} \sum_{x\in\square_n^{(j)},~y\in\square_n^{(k)}} e^{-|j-k|+1}  \nonumber \\
		& \lesssim \sum_{j,k} N_j N_k e^{-|j-k|} \quad (\text{where } N_j = \g(\square_n^{(j)}))  \nonumber \\
		&\lesssim\sum_{j,k} (N_j^2+N_k^2)e^{-|j-k|}  \nonumber \\ 
		& \lesssim \sum_j N_j^2 \sum_k e^{-|j-k|}  \nonumber \\
		& \lesssim\sum_{j=1}^n N_j^2.\label{eq:ineg2}
	}
	We also have
	\al{
		H_{n,\text{gauss}}(\g) &\gtrsim \sum_{x\neq y\in\g} e^{-(x-y)^2/2} \nonumber \\
		& \gtrsim \sum_j \sum_{x\neq y\in\g\cap\square_n^{(j)}} e^{-1/2}  \nonumber \\
		& \gtrsim \sum_{j=1}^n N_j^2\quad (\text{where } N_j = \g(\square_n^{(j)})).\label{eq:ineg3}
	}
	Lastly, since $e^{-|\xi|^2/2} \leq \frac{C}{|\xi|^{1-s}}$ globally for $\xi\in\R$ and some constant $C$, 
		\eq{H_{n,\text{gauss}}(\g) = \int_\R \abs{\widehat{\nu}(\xi)}^2 e^{-\abs{\xi}^2/2}\dif \xi \lesssim H_n^{(s)}(\g).
	\label{eq:ineg4}}
	Putting equations \eqref{eq:ineg1}, \eqref{eq:ineg2}, \eqref{eq:ineg3}, \eqref{eq:ineg4} altogether imply
	\eqn{
		\sfW_2^2(\g,\Leb_\sqn) \lesssim H_n^{(s)}(\g).
	}
	Thus it suffices to show
	\eq{\label{eq:ineg5}
		\E_{\sfP_n^s}\crch{H_n^{(s)}} \leq Cn.
	}
	As is standard in statistical physics,
	\eqn{
		\E_{\sfP_n^s}\crch{H_n^{(s)}} = -\partial_\beta \log Z_n^{(s,\beta)}.
	}
	An application of Hölder's inequality shows that $\beta \mapsto \log Z_n^{(s,\beta)}$ is convex: indeed, if $\beta,\beta'\geq0$ and $0<t<1$,
	\aln{
		Z_n^{(s,t\beta+(1-t)\beta')} &= \int \pr{e^{-\beta H_n^{(s)}}}^t~\pr{e^{-\beta' H_n^{(s)}}}^{1-t} \dif \Binom_{n,\sqn} \\ 
		&\leq \pr{\int e^{-\beta H_n^{(s)}}\dif\Binom_{n,\sqn}}^t \pr{\int e^{-\beta' H_n^{(s)}}\dif\Binom_{n,\sqn}}^{1-t} \\ 
		& \leq \pr{Z_n^{(s,\beta)}}^t~\pr{Z_n^{(s,\beta')}}^{1-t}.
	}
	The function is also strictly decreasing from $0$ to $-\infty$, and recalling \eqref{eq:partitionFunctionBornes}, this gives
	\eqn{
		\partial_\beta\log Z_n^{(s,\beta)}\geq \frac{\log Z_n^{(s,\beta)}-\log Z_n^{(s,0)}}{\beta} \geq \frac{-Cn}{\beta},
	}
	which implies \eqref{eq:ineg5} and thus concludes the proof.
\end{proof}

\subsection{$L^2$ transport in infinite volume}

Recall that Riesz gases are defined as accumulation points of the sequence $\ens{\sfP_n^{s,\beta,\mathrm{stat}}}_n$, with respect to the local topology (see Theorem \ref{thm:existence}). Using Lemmas \ref{lem:energievolumique} and \ref{lem:Wass}, we obtain
\eqn{
	\Wass_2^2(\sfP_n^{s,\beta,\mathrm{stat}},\Leb_\R)\leq \frac{\E_{\sfP_n^{s,\beta}}\crch{\sfW_2^2(\xi,\Leb_{\sqn})}}{n}\leq C<+\infty.
}
Recall also that the local topology is stronger than the weak topology, and that the Wasserstein distance for point processes is lower semicontinuous with respect to the weak topology. It follows that a Riesz gas $\sfP^{s,\beta}$ satisfies
\eqn{
	\Wass_2^2(\sfP^{s,\beta},\Leb_\R)\leq \liminf_n \Wass_2^2(\sfP_n^{s,\beta,\mathrm{stat}},\Leb_\R)\leq C<+\infty.
}
Finally, the properties of Number-Rigidity and Cyclic-Factor follow from Theorem \ref{thm1}. Hyperuniformity of type I is a consequence of \cite{dereudre2024nonhyperuniformityperturbedlattices} (Appendix B). This concludes the proof of Theorem \ref{thm2}.

\appendix
\section{Specific entropy}\label{sec:specificentropy}

The space $\pps$ equipped with the weak topology (w.r.t. the vague topology) does not contain much information. We sometimes use what is called the \textit{local topology}, which is defined in Section \ref{sec:SettingsTools}. Specific entropy gives a tool to prove compacity woith respect to this topology, and is defined in this section.

Recall that the relative entropy of two probability measure on a Polish space is defined by\eq{ \Ent{\mu}{\nu} = \begin{cases}
		\displaystyle \int \log\pr{\frac{\dif\mu}{\dif\nu}}\dif\mu & \text{if } \mu\ll\nu, \\ +\infty & \text{else.}\end{cases}} 
To define relative entropy on $\pps$, one would expect to simply use $\Ent{\sfP}{\Poi}$ to compare a stationary point process $\sfP$ with the Poisson point process. The problem is that this object is trivial in the sense that if $\sfP\in\pps$, $\sfP \ll \Poi \implies \sfP = \Poi.$ 
The better way is to only assume that $\sfP$ is stationary and $\sfP_A\ll\Poi_A$ for bounded domains $A$. This is usually the case, for instance if we are constructing $\sfP$ as a Gibbs measure. Then define \eq{\label{eq:Ents}\Ents{\sfP}{\Poi} = \lim_{n\to+\infty} \frac{\Ent{\sfP_\sqn}{\Poi_\sqn}}{\abs{\sqn}}. } The main results are as folows. We refer to \cite{georgii2011gibbs} (in a discrete setting) and \cite{georgiiZessin, dereudre2018introduction} (point process setting) for details.

\begin{proposition}\label{prop:ents}
	The limit in \eqref{eq:Ents} exists in $[0,+\infty]$ and is equal to \eq{\Ents{\sfP}{\Poi}= \sup_n\frac{\Ent{\sfP_\sqn}{\Poi_\sqn}}{\abs{\sqn}}.}  It is equal to $0$ iff $\sfP = \Poi$. Lastly, it is affine and lower semi-continuous w.r.t. $\sfP$. Lastly, the level-sets $\ens{\sfP\in\pps,~\Ents{\sfP}{\Poi}\leq C}$ are sequentially compact for the local topology.
\end{proposition}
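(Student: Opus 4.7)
The plan is to treat the four assertions using one core identity as the engine: for disjoint Borel sets $B_1,\ldots,B_k$ whose union is $\sqn$, the independence of $\Poi$ on disjoint sets gives $\Poi_\sqn = \bigotimes_i \Poi_{B_i}$, and a direct computation (splitting $\log(\dif\sfP_\sqn/\dif\Poi_\sqn)$ using the marginals $\sfP_{B_i}$) yields
\eq{
\Ent{\sfP_\sqn}{\Poi_\sqn} = \Ent{\sfP_\sqn}{\bigotimes_i\sfP_{B_i}} + \sum_i \Ent{\sfP_{B_i}}{\Poi_{B_i}} \geq \sum_i \Ent{\sfP_{B_i}}{\Poi_{B_i}}.
}
Splitting $\square_{n+m}$ into two adjacent sub-boxes of lengths $n$ and $m$ and invoking $\R$-stationarity of $\sfP$, the identity above gives super-additivity $a_{n+m} \geq a_n + a_m$ for $a_n := \Ent{\sfP_\sqn}{\Poi_\sqn}$. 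Fekete's lemma then produces the limit $a_n/n$ and its equality with $\sup_n a_n/n$ in $[0,+\infty]$.

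For the \textbf{zero value}, $\Ents{\sfP}{\Poi}=0$ together with the sup formula forces $\sfP_\sqn=\Poi_\sqn$ for every $n$, hence $\sfP=\Poi$; the converse is obvious. For \textbf{affinity}, convexity of $\Ents{\cdot}{\Poi}$ is inherited from that of ordinary relative entropy. For concavity I would use the pointwise lower bound (obtained by estimating $h\log h$ for $h=tf+(1-t)g$) integrating to
\eq{
\Ent{t\mu+(1-t)\nu}{\lambda} \geq t\Ent{\mu}{\lambda}+(1-t)\Ent{\nu}{\lambda}-H(t),
}
with $H(t)=-t\log t-(1-t)\log(1-t)$; applied on $\sqn$ and divided by $n$, the $H(t)/n$ correction vanishes in the limit.

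For \textbf{lower semi-continuity}, each $\mu\mapsto\Ent{\mu}{\Poi_\sqn}$ is l.s.c.\ on probability measures on $\Gamma_\sqn$ by the Donsker--Varadhan variational formula (a supremum of continuous affine functionals), the restriction $\sfP\mapsto\sfP_\sqn$ is continuous for the local topology, and the supremum of l.s.c.\ functions is l.s.c.

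The \textbf{main obstacle} is sequential compactness of the level sets. The plan is to convert the entropy bound into tightness via the classical inequality
\eq{
\int f\dif\mu\leq \Ent{\mu}{\nu}+\log\int e^f \dif\nu,
}
applied with $\mu=\sfP_B$, $\nu=\Poi_B$ and $f=\lambda\xi(B)$: combined with the projection bound $\Ent{\sfP_B}{\Poi_B}\leq\Ent{\sfP_\sqn}{\Poi_\sqn}\leq Cn$ for any box $B\subset\sqn$, this yields uniform exponential moments of the counts $\xi(B)$ over the level set. Tightness in the local topology then follows by diagonal extraction over an exhausting sequence of boxes, producing a subsequential limit; $\R$-stationarity passes to the limit, and the entropy bound on the limit is inherited from the l.s.c.\ just established. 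The delicate point is checking that weak convergence of the bounded-box marginals upgrades to convergence against all local tame functionals, which is exactly what the uniform exponential integrability provides.
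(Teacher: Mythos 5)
The paper itself does not prove Proposition~\ref{prop:ents}; it merely cites the standard references (Georgii's book in the discrete case, Georgii--Zessin and Dereudre's lecture notes for point processes). So there is no ``paper proof'' to compare against, and your argument must be assessed on its own merits. Your outline follows the classical route from exactly those references: superadditivity via the entropy chain rule over a disjoint decomposition of boxes plus Fekete, the obvious $\sfP=\Poi$ characterization, affinity from convexity of $\Ent{\cdot}{\nu}$ together with the reverse inequality with defect $H(t)$ (which vanishes under division by volume), Donsker--Varadhan for lower semicontinuity, and a tightness argument for compactness of level sets.

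One inaccuracy worth flagging is the phrase ``uniform exponential moments of the counts.'' The Gibbs/Young inequality
\begin{equation*}
\int f\,\dif\mu \;\leq\; \Ent{\mu}{\nu} + \log\int e^{f}\,\dif\nu
\end{equation*}
with $f=\lambda\,\xi(B)$ and $\nu=\Poi_B$ gives a uniform bound on $\E_{\sfP}[\xi(B)]$ (and, applying it with truncated $f=\lambda\,\xi(B)\mathbbm{1}_{\xi(B)>M}$ and letting $M\to\infty$, uniform integrability of $\xi(B)$ over the level set). It does \emph{not} yield a bound on $\E_{\sfP}[e^{\lambda\xi(B)}]$. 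Fortunately, uniform integrability of the count variables is exactly what is needed to upgrade a weak-topology subsequential limit (obtained from the moment/intensity bound plus a diagonal extraction) to convergence against local \emph{tame} functionals, so the compactness conclusion survives; you should just replace ``exponential moments'' by ``uniform integrability'' and carry out the truncation step explicitly. The rest of the outline (superadditivity, affinity, l.s.c.) is correct and matches the standard proofs the paper points to.
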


Unlike the distance $\Wass_p$ which is infinite as soon as we compare two point processes of distinct intensities, the specific entropy might be finite. 
Now, let us show the following classical lemma, which links infinite volume specific entropy with finite volume relative entropy, when stationarizing a finite point process. It enables to prove compacity for sequence of stationarized point processes.

\begin{lemma}\label{lem:ents}
	Let $\square$ be a bounded box in $\R^d$ and $\sfP$ be a point process living in $\square$. Let $\sfP_\square^{stat}\in\pps$ be the stationarized version of $\sfP_\square$. We have: \eq{\Ents{\sfP^{stat}_\square}{\Poi} \leq \frac{\Ent{\sfP_\square}{\Poi_\square}}{\abs{\square}}.}
\end{lemma}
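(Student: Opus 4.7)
The plan is to bound the finite-volume relative entropy of $\sfP^{stat}_\square$ on $\square_n$ by combining three ingredients: convexity of relative entropy (to remove the uniform shift $U$), the tensorization of relative entropy on products (exploiting that both $\sfP^{til}_\square$ and $\Poi$ factor independently over disjoint cells), and the data-processing inequality for relative entropy under restriction to a sub-box (to handle the cells that only partially overlap $\square_n$). Then I divide by $\abs{\square_n}$ and send $n\to+\infty$, invoking Proposition \ref{prop:ents} to identify the limit with the specific entropy.

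More concretely, write $m = \abs{\square}$ and recall that $\sfP^{stat}_\square$ is the law of $\theta_U \sum_{k\in m\Z} \xi_k$ where the $\xi_k \sim \theta_k\sfP_\square$ are independent and $U$ is uniform on $\square$ and independent of everything else. Fix $n$ large and denote by $\sfP^{til,u}_\square$ the law of the tiled process shifted deterministically by $u\in\square$. The usual convexity (joint convexity, applied to the mixture over $u$) gives
\eq{\Ent{(\sfP^{stat}_\square)_{\square_n}}{\Poi_{\square_n}} \leq \int_\square \frac{\dif u}{\abs{\square}} \, \Ent{(\sfP^{til,u}_\square)_{\square_n}}{\Poi_{\square_n}}.}
For each fixed $u$, the cells $\{\theta_{k+u}\square\}_{k\in m\Z}$ partition $\R$, so both $(\sfP^{til,u}_\square)_{\square_n}$ and $\Poi_{\square_n}$ are product measures over the non-empty intersections $\theta_{k+u}\square \cap \square_n$. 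Tensorization then yields a sum over those cells; for each cell that is fully inside $\square_n$, translation invariance of $\Poi$ makes the corresponding term equal to $\Ent{\sfP_\square}{\Poi_\square}$, while for the (at most two) partially-contained boundary cells, the data-processing inequality bounds the contribution by $\Ent{\sfP_\square}{\Poi_\square}$ as well.

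Counting cells shows the number of non-empty intersections is at most $n/m + 2$, uniformly in $u$. Putting these steps together,
\eq{\Ent{(\sfP^{stat}_\square)_{\square_n}}{\Poi_{\square_n}} \leq \left(\frac{n}{m}+2\right) \Ent{\sfP_\square}{\Poi_\square}.}
Dividing by $\abs{\square_n}=n$ and letting $n\to+\infty$ gives, via Proposition \ref{prop:ents},
\eq{\Ents{\sfP^{stat}_\square}{\Poi} = \lim_{n\to+\infty} \frac{\Ent{(\sfP^{stat}_\square)_{\square_n}}{\Poi_{\square_n}}}{\abs{\square_n}} \leq \frac{\Ent{\sfP_\square}{\Poi_\square}}{m},}
which is the desired inequality.

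The main obstacle to watch carefully is the boundary-cell accounting: one must check that restricting the product measure $(\sfP^{til,u}_\square)_{\square_n}$ to a partial cell against $\Poi$ on that partial cell indeed does not exceed $\Ent{\sfP_\square}{\Poi_\square}$. This follows from data-processing applied to the projection that forgets the points in $\theta_{k+u}\square \setminus \square_n$, using crucially that $\Poi_{\theta_{k+u}\square}$ splits as an independent product $\Poi_{\theta_{k+u}\square\cap\square_n}\otimes \Poi_{\theta_{k+u}\square\setminus\square_n}$. Everything else is bookkeeping that washes out in the $n\to+\infty$ limit, and the multiplicative constant $1/m = 1/\abs{\square}$ emerges naturally from the cell density.
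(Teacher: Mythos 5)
Your proof is correct and relies on the same ingredients as the paper's: convexity of relative entropy to remove the uniform shift $U$, the product structure of the tiled law and of Poisson over disjoint cells, the data-processing inequality to handle restriction, and a cell count whose normalization by $\abs{\square_n}$ yields the factor $1/\abs{\square}$ in the limit. The only distinctions are cosmetic — you tensorize first and data-process only the boundary cells, whereas the paper data-processes once (from a block of $k$ tiles covering $\square_{n+1}$ down to $\square_n$) and then tensorizes — together with a minor notational slip (writing the $d=1$ counts $n/m+2$ and $\abs{\square_n}=n$ although the lemma is stated in $\R^d$), which washes out in the limit.
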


\begin{proof}Let $\square_n$ be the square, centered on $\square$, consisting of $(2n+1)^d$ disjoint boxes which are translations of $\square$, such that $\abs{\square_n} = (2n+1)^d \abs{\square}.$ Let us denote $\til$ the map which consists in tiling. Then we have the following inequalities: \aln{\Ent{\sfP^{stat}_{\square}\big|_\sqn}{\Poi_{\square_n}} & = \Ent{\int_{u\in\square}\frac{\dif u}{\abs{\square}}~\sfP^{til}_\square\circ\theta_u^{-1}\circ\pi_{\square_n}^{-1}}{\Poi_{\square_n}} \\ &\leq \int_{u\in\square}\frac{\dif u}{\abs{\square}} ~\Ent{\sfP^{til}_\square\circ\theta_u^{-1}\circ\pi_{\square_n}^{-1}}{\Poi_{\square_n}} \\ &\leq \int_{u\in\square}\frac{\dif u}{\abs{\square}} ~\Ent{\sfP^{\otimes\Z^d}_\square\circ\til^{-1}\circ\theta_u^{-1}\circ\pi_{\square_n}^{-1}}{\Poi_\square^{\otimes\Z^d}\circ\til^{-1}\circ\theta_u^{-1}\circ\pi_{\square_n}^{-1}}. } We firstly used the convexity of relative entropy, and then the general properties of the Poisson point process. Now notice that at $\square_n$ and $u\in\square$ fixed, we do not need to generate every tile on the whole space. In fact, since we shift by $u$ and then restrict to $\square_n$, the largest information we might need is in the configuration in $\square_n + \square = \square_{n+1}$ which contains $k =(2n+3)^d$ tiles. Thus the entropy inside the last integral is the same as restricting to $k$ tiles \aln{\Ent{\sfP_\square^{\otimes k}\circ\til^{-1}\circ\theta_u^{-1}\circ\pi_{\square_n}^{-1}}{\Poi_\square^{\otimes k}\circ\til^{-1}\circ\theta_u^{-1}\circ\pi_{\square_n}^{-1}} & \leq \Ent{\sfP_\square^{\otimes k}}{\Poi_\square^{\otimes k}} = k~\Ent{\sfP_\square}{\Poi_\square}.} Here we used that the relative entropy decreases when taking pushforwards of measures (there is an exact formula). The last inequality implies the same bound integrating against $\dfrac{\dif u}{\abs{\square}}$, and then \aln{\frac{\Ent{\sfP^{til}_{\square}\big|_\sqn}{\Poi_{\square_n}}}{\abs{\square_n}}& \leq \frac{k}{\abs{\square_n}} \Ent{\sfP_\square}{\Poi_\square} = \frac{(2n+3)^d}{(2n+1)^d} \frac{ \Ent{\sfP_\square}{\Poi_\square}}{\abs{\square}}.} This gives the result for $n\to+\infty.$ 
	
\end{proof}

\section*{Acknowledgements}

David Dereudre and Rafaël Digneaux acknowledge the support of the CDP C2EMPI, together with the French State under the France-2030 programme, the University of Lille, the Initiative of Excellence of the University of Lille, the European Metropolis of Lille for their funding and support of the R-CDP-24-004-C2EMPI project. The authors also want to thank Martin Huesmann for useful discussions.

\printbibliography

\end{document}